\newtheorem{dfn} [subsection]{Definition}
\newtheorem{obs} [subsection]{Remark}
\newtheorem{prop}[subsection]{Proposition}
\newtheorem{teor}[subsection]{Theorem}
\newtheorem{lema}[subsection]{Lemma}
\newtheorem{cor} [subsection]{Corollary}
\newcommand{\Zng}{$\mathbb Z^n$-graded $S$-module}
\def\sdepth{\operatorname{sdepth}}
\def\qdepth{\operatorname{hdepth}}
\def\depth{\operatorname{depth}}
\def\deg{\operatorname{deg}}
\def\reg{\operatorname{reg}}
\def\Gin{\operatorname{Gin}}
\def\PP{\operatorname{P}}
\def\H{\operatorname{H}}
\begin{document}
\selectlanguage{english}
\frenchspacing

\numberwithin{equation}{section}

\title{On the Hilbert depth of the Hilbert function of a finitely generated graded module}
\author[Silviu B\u al\u anescu and Mircea Cimpoea\c s]{Silviu B\u al\u anescu$^1$ and Mircea Cimpoea\c s$^2$}
\date{}

\keywords{Graded module, Hypergeometric function, Hilbert depth, Squarefree monomial ideal}

\subjclass[2020]{13C15, 13P10, 13F20, 33B15, 05A18, 05A20}

\footnotetext[1]{ \emph{Silviu B\u al\u anescu}, University Politehnica of Bucharest, Faculty of
Applied Sciences, 
Bucharest, 060042, E-mail: silviu.balanescu@stud.fsa.upb.ro}
\footnotetext[2]{ \emph{Mircea Cimpoea\c s}, University Politehnica of Bucharest, Faculty of
Applied Sciences, 
Bucharest, 060042, Romania and Simion Stoilow Institute of Mathematics, Research unit 5, P.O.Box 1-764,
Bucharest 014700, Romania, E-mail: mircea.cimpoeas@upb.ro,\;mircea.cimpoeas@imar.ro}

\begin{abstract}
Let $K$ be a field, $A$ a standard graded $K$-algebra and $M$ a finitely generated graded $A$-module.
Inspired by our previous works, see \cite{lucrare2} and \cite{lucrare7}, we study the invariant called \emph{Hilbert depth} of $h_M$, that is
$$ \qdepth(h_M)=\max\{d\;:\; \sum\limits_{j\leq k} (-1)^{k-j} \binom{d-j}{k-j} h_{M}(j) \geq 0 \text{ for all } k\leq d\}, $$
where $h_M(-)$ is the Hilbert function of $M$, and we prove basic results regard it.

Using the theory of hypergeometric functions, we prove that $\qdepth(h_S)=n$, where $S=K[x_1,\ldots,x_n]$. 
We show that $\qdepth(h_{S/J})=n$, if $J=(f_1,\ldots,f_d)\subset S$ is a complete intersection monomial ideal with
$\deg(f_i)\geq 2$ for all $1\leq i\leq d$. Also, we show that $\qdepth(h_{\overline M})\geq \qdepth(h_M)$
for any finitely generated graded $S$-module $M$, where $\overline M=M\otimes_S S[x_{n+1}]$.
\end{abstract}

\maketitle

\section*{Introduction}

Let $S=K[x_1,\ldots,x_n]$ be the ring of polynomials in $n$ variables over a field $K$.
The Hilbert depth of a finitely graded $S$-module $M$ is the maximal depth of a finitely graded $S$-module $N$ with the same
Hilbert series as $M$; see \cite{uli} for further details. In \cite{lucrare2} we proved a new formula for the Hilbert depth
of a quotient $J/I$ of two squarefree monomial ideals $I\subset J\subset S$. This allowed us, in \cite{lucrare7}, to extend
the definition of Hilbert depth to any (numerical) function $h:\mathbb Z\to\mathbb Z_{\geq 0}$ with the property that $h(j)=0$ for $j\ll 0$. 

More precisely, we set 
$$\qdepth(h):=\max\{d\;:\; \sum_{j\leq k} (-1)^{k-j}\binom{d-j}{k-j}h(j)\geq 0\text{ for all }k\leq d\}.$$
Let $A$ be a standard graded $K$-algebra and $M$ a finitely generated graded $A$-module. Since $h_M(-)$, the Hilbert function of $M$, has the property
$h_M(0)=0$ for $j\ll 0$, it makes sense to consider its Hilbert depth, as was defined above; see also Definition \ref{d2}.


Note that, if $I\subset J\subset S$ are squarefree monomial ideals, then $\qdepth(J/I)$ and $\qdepth(h_{J/I})$ are not the same.
However, there is the following connection between these invariants: If 
$$M(J/I)=(J+(x_1^2,\ldots,x_n^2))/(I+(x_1^2,\ldots,x_n^2)),$$
then $\qdepth(J/I)=\qdepth(h_{M(J/I)})$; see Proposition \ref{qq}.

In Proposition \ref{p22} we prove that
$$k_0 \leq \qdepth(h_M) \leq k_0+\frac{h_1}{h_0},$$ 
where $k_0=k_0(M)=\min\{k\;:\;M_k\neq 0\}$, $h_0=h_M(k_0)$ and $h_1=h_M(k_0+1)$.

In Proposition \ref{p23} we prove that if $M$ is of finite length, then 
$$\qdepth(h_M)\leq k_f(M):=\max\{k\;:\;M_k\neq 0\}.$$
In particular, if $M=S/J$ where $J$ is a graded Artinian ideal, we note in Corollary \ref{c23} that 
$$\qdepth(h_{S/J})\leq \reg(S/J).$$
In Proposition \ref{exact} we show that if $0\to U\to M\to N\to 0$ is a short exact sequence of (nonzero) finitely generated
graded $A$-modules, then 
$$\qdepth(h_M)\geq \min\{\qdepth(h_U),\qdepth(h_N)\}.$$
In Proposition \ref{shif} we prove that 
$$\qdepth(h_{M(m)})=\qdepth(h_M)-m,$$
where $M(m)$ is the $m$-th shift module of $M$.

Using the sign of the hypergeometric function ${}_2F_1(-k,n,-n;-1)$, see Lemma \ref{lemuta}, we prove in Theorem \ref{qdeps} that 
$\qdepth(h_S)=n$. Consequently, in Corollary \ref{free} we prove that if 
$$F=S(a)^{n_1}\oplus S(a-1)^{n_2} \oplus S(a_1)\oplus\cdots\oplus S(a_r),$$
 where $n_1,n_2,a,a_j$ are integers such that $n_1>n_2\geq 0$ and $a\geq a_j+2$ for all $1\leq j\leq r$, then 
$$\qdepth(h_F)=n-a.$$
In Theorem \ref{teo} and Corollary \ref{coro} we prove that if $J=(f_1,\ldots,f_r)\subset S$ is a graded complete 
intersection with $\deg(f_i)\geq 2$ for all $1\leq i\leq r$, where $0\leq r\leq n$, then 
$$\qdepth(h_{S/J})=n.$$
In particular, for $r=0$ we obtain a new proof of the fact that $\qdepth(h_S)=n$.

Finally, in Theorem \ref{ultimap} we show that $\qdepth(h_{\overline M})\geq \qdepth(h_M)$, 
where $\overline S=S[x_{n+1}]$, $M$ is a finitely generated $S$-module and $\overline M=M\otimes_S \overline S$.

\section{Basic properties}

Let $K$ be a field and let 
$$A=\bigoplus_{n\geq 0}A_n,$$
 be a standard graded $K$-algebra, i.e. $A$ is finitely generated, 
$A_0=K$ and $A_1$ generates $A$. 

Let 
$$M=\bigoplus_{k\in\mathbb Z}M_k,$$
 be a nonzero graded finitely generated $A$-module.

Since $M$ is finitely generated, $\dim_K(M_k)<\infty$ for all $k\in\mathbb Z$ and $M_k=0$ for $k\ll 0$.
In particular, there exists $k_0(M)\in\mathbb Z$ such that
$$k_0(M):=\min\{k\;:\;M_k\neq 0\}.$$
We consider the \emph{Hilbert function} of $M$, that is
$$h_M(-):\mathbb Z\to\mathbb Z_{\geq 0},\; h_M(k):=\dim_K M_k,\text{ for all }k\in\mathbb Z.$$
We recall the definition of the Hilbert depth of $h_M$ from \cite{lucrare7}.

Let $d$ be an integer and let
\begin{equation}\label{betamk}
\beta^d_k(h_M):=\begin{cases} \sum\limits_{j=k_0(M)}^k (-1)^{k-j} \binom{d-j}{k-j} h_M(j), & k_0(M)\leq k\leq d \\ 0,& \text{ otherwise}\end{cases}.
\end{equation}
From \eqref{betamk} we deduce that
\begin{equation}\label{alfamk}
h_M(k):=\sum_{j=k_0(M)}^k \binom{d-j}{k-j} \beta^d_j(h_M) \text{ for all } k_0(M)\leq k\leq d.
\end{equation}
With the above notation, we have:

\begin{dfn}\label{d2}
The \emph{Hilbert depth} of $h_M$ is
$$\qdepth(h_M):=\max\{d\in\mathbb Z\;:\;\beta^d_h(h_M)\geq 0\text{ for all }k\leq d\}.$$
\end{dfn}

Note that \eqref{betamk}, \eqref{alfamk} and Definition \ref{d2} hold for any function $h:\mathbb Z\to\mathbb Z_{\geq 0}$
with $h(j)=0$ for $j\ll 0$.

\begin{obs}\label{r22}\rm
 If $\beta^d_k(h_M)\geq 0$ for all $k_0(M)\leq k\leq d$, then, from \cite[Corollary 1.4]{lucrare7},
 it follows that $\beta_k^{d'}(h_M)\geq 0$ for all $d\leq d'$ and $k_0(M)\leq k\leq d'$.
 Also, it is clear that $k_0(M)=k_0(h_M)$.
\end{obs}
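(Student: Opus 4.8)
The plan is to reduce everything to a single identity relating the coefficients $\beta^{d}_k(h_M)$ at one value of $d$ to those at the neighbouring value, and then to propagate nonnegativity by a finite induction. Write $k_0=k_0(M)$ and set
$$B^d(t):=\sum_{k=k_0}^{d}\beta^d_k(h_M)\,t^k.$$
First I would put \eqref{betamk} into generating-function form: interchanging the order of the two summations and using the elementary identity $\sum_{m=0}^{d-j}(-1)^m\binom{d-j}{m}t^m=(1-t)^{d-j}$ yields the closed form
$$B^d(t)=\sum_{j=k_0}^{d}h_M(j)\,t^j(1-t)^{d-j}.$$
This is the technical core of the argument; the rest is bookkeeping.

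From this closed form the one-step relation is immediate. Splitting off the top summand $j=d$ and factoring $1-t$ out of the remaining terms gives
$$B^{d}(t)=(1-t)\,B^{d-1}(t)+h_M(d)\,t^{d}.$$
Reading off coefficients for $k_0\le k\le d-1$ produces the two mutually inverse forms of the relation: on one hand the alternating difference $\beta^{d}_k=\beta^{d-1}_k-\beta^{d-1}_{k-1}$, and on the other, by dividing $B^{d}(t)-h_M(d)t^{d}$ by $1-t$ (equivalently, by telescoping the previous line),
$$\beta^{d-1}_k(h_M)=\sum_{i=k_0}^{k}\beta^{d}_i(h_M),\qquad k_0\le k\le d-1.$$
The second form is the one that carries the implication: a partial sum of nonnegative numbers is nonnegative, whereas the alternating differences in the first form need not be.

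With the partial-sum identity in hand the conclusion is essentially immediate. Assuming $\beta^{d}_k(h_M)\ge 0$ for all $k_0\le k\le d$, each $\beta^{d-1}_k(h_M)$ is a sum of nonnegative terms, hence $\beta^{d-1}_k(h_M)\ge 0$ for all $k_0\le k\le d-1$; iterating this single step (the induction terminating at $d'<k_0$, where the condition is vacuous) gives $\beta^{d'}_k(h_M)\ge 0$ for every $d'\le d$ and every $k_0\le k\le d'$. This propagation is precisely the content of \cite[Corollary 1.4]{lucrare7}: the set of admissible $d$ is downward closed, which is exactly what guarantees that $\qdepth(h_M)$ is a well-defined maximum. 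Finally, $k_0(M)=k_0(h_M)$ is immediate, since $h_M(k)=\dim_K M_k$ is nonzero exactly when $M_k\ne 0$.

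The step that most needs care is the passage between $B^{d}$ and $B^{d-1}$: one must check that $B^{d}(t)-h_M(d)t^{d}$ is genuinely divisible by $1-t$ (which holds because $B^{d-1}(t)$ is a polynomial, i.e.\ $\beta^{d-1}_k=0$ for $k>d-1$) and must track the boundary indices $k=k_0$ and $k=d-1$ so that the telescoping sum starts and ends correctly. Once the identity $\beta^{d-1}_k=\sum_{i=k_0}^{k}\beta^{d}_i$ is secured, there is no further obstacle, and in particular it is this identity — rather than the reverse difference relation — that fixes the direction in which nonnegativity is inherited.
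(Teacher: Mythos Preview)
Your argument is correct and, in fact, supplies more than the paper does: Remark~\ref{r22} is not proved in the paper at all, it is simply a citation of \cite[Corollary~1.4]{lucrare7}. Your derivation of the partial-sum identity
\[
\beta^{\,d-1}_k(h_M)\;=\;\sum_{i=k_0}^{k}\beta^{\,d}_i(h_M),\qquad k_0\le k\le d-1,
\]
via the generating polynomial $B^d(t)=\sum_j h_M(j)\,t^j(1-t)^{d-j}$ and the factorisation $B^d(t)=(1-t)B^{d-1}(t)+h_M(d)\,t^d$, is clean and is exactly the mechanism behind the cited corollary; the nonnegativity then propagates by the obvious induction. The boundary checks you flag (the term $\beta^{d-1}_{k_0-1}=0$ starting the telescoping, and the vanishing of $\beta^{d-1}_k$ for $k>d-1$) are the right ones.

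One point worth making explicit: the paper's wording ``for all $d\le d'$'' is a typo. Read literally it would assert that nonnegativity propagates \emph{upward} in $d$, which is false (take $M=K$, so $h_M=\delta_0$; then $\beta^0_0=1\ge 0$ but $\beta^1_1=-1$). What is true, and what you prove, is the \emph{downward} direction $d'\le d$: the set of admissible $d$ is downward closed. You have silently corrected this, and rightly so; this downward closure is also what is actually used later in the paper (e.g.\ in the induction step of Theorem~\ref{teo}, where $\beta^{n-d_n+1}_{k-d_n+1}(h_{S'/J'})\ge 0$ is needed with $n-d_n+1\le n-2< n-1=\qdepth(h_{S'/J'})$).
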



Let $0\subset I\subsetneq J\subset S=K[x_1,\ldots,x_n]$ be two squarefree monomial ideals. 
We recall the method of computing Hilbert depth of $J/I$ given in \cite{lucrare2}. 
For $0\leq k\leq n$, we let 
$$\alpha_k(J/I)=|\{u\in S\text{ is a squarefree monomial with }u\in J\setminus I\}|.$$
For all $0\leq d\leq n$ and $0\leq k\leq d$, we consider the integers:
\begin{equation}\label{betak}
\beta_k^d(J/I):=\sum_{j=0}^k (-1)^{k-j} \binom{d-j}{k-j} \alpha_j(J/I).
\end{equation}
We recall the following result:

\begin{teor}(\cite[Theorem 2.4]{lucrare2})\label{d1}
With the above notations, the Hilbert depth of $J/I$ is 
$$\qdepth(J/I):=\max\{d\;:\;\beta_k^d(J/I) \geq 0\text{ for all }0\leq k\leq d\}.$$
\end{teor}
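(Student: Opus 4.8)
The plan is to read $\qdepth(J/I)$ through squarefree Hilbert decompositions and reduce the statement to a combinatorial claim about the poset $P=P_{J/I}$ of squarefree monomials of $J\setminus I$, regarded inside the Boolean lattice $2^{[n]}$. Concretely, $\qdepth(J/I)\ge d$ should be equivalent to the existence of a partition of $P$ into Boolean intervals $[u,Z]=\{v\text{ squarefree}\;:\;u\mid v,\ \supp(v)\subseteq Z\}$, with $\supp(u)\subseteq Z\subseteq[n]$ and $|Z|\ge d$ for every interval, the squarefree Hilbert function of one interval being $g_{[u,Z]}(k)=\binom{|Z|-\deg(u)}{k-\deg(u)}$. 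The first step is to justify this equivalence via the standard theory of squarefree modules (of Herzog--Vladoiu--Zheng type): since $J/I$ has a $0/1$-valued multigraded Hilbert function, any module with the same series has support exactly $P$, and a non-negative combination of interval indicator functions equal to a $0/1$ function is forced to be a genuine disjoint partition; the depth realized by such a partition is the minimum $|Z|$ occurring in it. Writing $D:=\max\{d:\beta_k^d(J/I)\ge 0\text{ for all }0\le k\le d\}$, it then remains to prove $\qdepth(J/I)=D$ by two inequalities.

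For $\qdepth(J/I)\le D$, fix a partition of $P$ into intervals with all $|Z|\ge d$, where $d=\qdepth(J/I)$. Since $\alpha_k(J/I)=\sum g_{[u,Z]}(k)$ (summed over the intervals of the partition) and the map $h\mapsto\beta_k^d(h)$ of \eqref{betamk} is linear in $h$, we get $\beta_k^d(J/I)=\sum \beta_k^d(g_{[u,Z]})$. The key computation is the closed form for a single interval, which follows by upper negation and Vandermonde's identity:
$$\beta_k^d(g_{[u,Z]})=\binom{\,|Z|-d-1+(k-\deg(u))\,}{\,k-\deg(u)\,}.$$
This is nonnegative precisely because $|Z|\ge d$: for $|Z|=d$ it equals $1$ when $k=\deg(u)$ and $0$ otherwise, while for $|Z|>d$ it is an ordinary binomial coefficient. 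Summing over the partition yields $\beta_k^d(J/I)\ge 0$ for all $k$, in particular for $k\le d$, so $d$ lies in the set defining $D$ and hence $\qdepth(J/I)\le D$.

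For the reverse inequality $\qdepth(J/I)\ge D$ I must, conversely, produce a partition of $P$ into intervals with $|Z|\ge D$ from the hypothesis $\beta_k^D(J/I)\ge 0$ for $0\le k\le D$; this is the main obstacle. The numerics are dictated by the inversion \eqref{alfamk}: a partition using $\beta_j^D(J/I)$ intervals with bottom of degree $j$ and top-size exactly $D$ accounts for $\alpha_k(J/I)$ in all degrees $k\le D$, while each squarefree monomial of degree $>D$ sits in an interval of size $>D$ automatically. The difficulty is not this bookkeeping but the \emph{realizability} of the prescribed multiplicities as an honest interval partition of the \emph{specific} poset $P$, i.e. the (a priori surprising) fact that the counts $\alpha_k(J/I)$ alone, through the signs of the $\beta_k^D$, decide existence. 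I would handle this by induction on $n$: split $P$ according to divisibility by $x_n$ into two squarefree posets in $n-1$ variables, relate their $\alpha$- and $\beta$-numbers to those of $P$, and apply the inductive hypothesis to partition each piece while preserving the size bound; a compression (Kruskal--Katona type) argument would reduce to a canonical shape of $P$ for which the required intervals can be exhibited explicitly. Combining the two inequalities gives $\qdepth(J/I)=D$, which is the assertion of Theorem \ref{d1}.
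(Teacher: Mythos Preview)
The paper does not actually prove this theorem: it is quoted verbatim from \cite[Theorem~2.4]{lucrare2} and used as a black box, so there is no in-paper argument to compare your proposal against.

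On the merits of your sketch itself, the inequality $\qdepth(J/I)\le D$ is fine: once you grant that $\qdepth(J/I)\ge d$ is witnessed by a partition of the squarefree poset $P_{J/I}$ into intervals $[u,Z]$ with $|Z|\ge d$, linearity of $h\mapsto\beta_k^d(h)$ together with the Vandermonde computation
\[
\beta_k^d(g_{[u,Z]})=\binom{|Z|-d-1+(k-\deg u)}{\,k-\deg u\,}\ge 0\quad(|Z|\ge d)
\]
does give $\beta_k^d(J/I)\ge 0$ for all $k$, hence $d\le D$.

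The genuine gap is the converse, $\qdepth(J/I)\ge D$. You correctly identify it as ``the main obstacle'' and then do not actually cross it. The inductive scheme you outline---split $P_{J/I}$ by divisibility by $x_n$, relate the $\beta$-numbers of the two halves to those of $P_{J/I}$, and recurse---does not go through as stated: knowing $\beta_k^D(J/I)\ge 0$ for all $k\le D$ does \emph{not} imply the corresponding inequalities for the two deletion/link pieces, so the inductive hypothesis is not available. Your fallback, a ``Kruskal--Katona type compression'' to a canonical shape of $P$ on which the intervals can be exhibited by hand, is also only a slogan: compression preserves the $f$-vector but it is not at all clear that it preserves (or reflects) the existence of interval partitions of the required minimum top-size, which is exactly what you need. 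In short, the hard implication---that nonnegativity of the numbers $\beta_k^D(J/I)$, which depend only on the $\alpha_k$, forces the existence of an interval partition of the \emph{specific} poset $P_{J/I}$ with all tops of size $\ge D$---is precisely the content of \cite[Theorem~2.4]{lucrare2}, and your proposal does not supply an argument for it. A complete proof must either reproduce the construction from \cite{lucrare2} or give an independent realizability argument; the inductive/compression outline as written does not do either.
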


We consider the $S$-module
$$M(J/I):=(J+(x_1^2,\ldots,x_n^2))/(I+(x_1^2,\ldots,x_n^2)).$$
It is easy to see that 
\begin{equation}\label{mij}
M(J/I)=\bigoplus_{\substack{u\in S\text{ squarefree monomial}\\ u \in J\setminus I}}Ku.
\end{equation}
From \eqref{mij} and the definition of $\alpha_k(J/I)$'s 
it follows that
\begin{equation}\label{hak}
\alpha_k(\PP_{J/I})=h_{M(J/I)}(k)\text{ for all }0\leq k\leq n.
\end{equation}
From Theorem \ref{d1}, Definition \ref{d2} and \eqref{hak} we get the following result:

\begin{prop}\label{qq}
With the above notations, we have $$\qdepth(J/I)=\qdepth(h_{M(J/I)}).$$
\end{prop}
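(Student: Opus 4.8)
The plan is to reduce the statement to the identity \eqref{hak}, together with the vanishing of $h_{M(J/I)}$ outside the range $\{0,1,\dots,n\}$, and then to match the two families of coefficients $\beta_k^d(J/I)$ and $\beta_k^d(h_{M(J/I)})$ term by term.

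First I would observe that, since $I\subsetneq J$, the module $M(J/I)$ is nonzero, and by \eqref{mij} it is a finite-dimensional $K$-vector space whose basis consists of the squarefree monomials $u\in J\setminus I$. As $\deg u\in\{0,1,\dots,n\}$ for every such $u$, this forces $h_{M(J/I)}(k)=0$ for $k\notin\{0,\dots,n\}$, while \eqref{hak} gives $h_{M(J/I)}(k)=\alpha_k(J/I)$ for $0\le k\le n$. Writing $m:=k_0(M(J/I))$, which by Remark \ref{r22} equals $k_0(h_{M(J/I)})=\min\{k:\alpha_k(J/I)\ne 0\}\ge 0$, I would then compare, for each $d\in\{0,1,\dots,n\}$ and each $0\le k\le d$, the coefficient $\beta_k^d(J/I)$ from \eqref{betak} with $\beta_k^d(h_{M(J/I)})$ from \eqref{betamk}: for $k\ge m$ the two sums agree because the extra summands with $j<m$ in \eqref{betak} contribute $\alpha_j(J/I)=0$, and for $0\le k<m$ both sums vanish for the same reason. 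Hence $\beta_k^d(J/I)=\beta_k^d(h_{M(J/I)})$ whenever $d\in\{0,\dots,n\}$ and $0\le k\le d$.

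Next I would translate this equality into the two definitions of Hilbert depth. In Definition \ref{d2} the sign conditions are quantified over all $k\le d$, but for $k<m$ (in particular for negative $k$) we have $\beta_k^d(h_{M(J/I)})=0$ by the convention in \eqref{betamk}; thus the condition ``$\beta_k^d(h_{M(J/I)})\ge 0$ for all $k\le d$'' is equivalent to the condition ``$\beta_k^d(J/I)\ge 0$ for all $0\le k\le d$'' appearing in Theorem \ref{d1}, at least for $d\in\{0,\dots,n\}$. Since $d=0$ satisfies both conditions, both invariants are nonnegative; and since $M(J/I)$ has finite length, Proposition \ref{p23} gives $\qdepth(h_{M(J/I)})\le k_f(M(J/I))\le n$. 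Consequently both maxima are taken over the same subset of $\{0,1,\dots,n\}$, and therefore $\qdepth(J/I)=\qdepth(h_{M(J/I)})$.

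The computation is otherwise routine; the one point that deserves attention is the difference between the two set-ups --- Theorem \ref{d1} lets $d$ range over $\{0,\dots,n\}$ whereas Definition \ref{d2} lets $d$ range over all of $\mathbb Z$ and quantifies over all $k\le d$ --- so I would be careful to invoke Proposition \ref{p23} (or an ad hoc degree bound) to confine $\qdepth(h_{M(J/I)})$ to $\{0,\dots,n\}$ before identifying the two maxima.
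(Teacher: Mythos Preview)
Your proof is correct and follows the same approach as the paper: both rely on \eqref{hak} together with Theorem~\ref{d1} and Definition~\ref{d2} to identify the two families of $\beta$-coefficients. The paper states this in a single sentence, whereas you have carefully spelled out the bookkeeping (in particular the use of Proposition~\ref{p23} to restrict $d$ to $\{0,\dots,n\}$), which is a reasonable precaution.
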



In the following, all modules are assumed finitely generated over a standard graded $K$-algebra $A$, unless it is stated otherwise:

\begin{prop}\label{p22}
Let $M$ be a nonzero graded $A$-module, $k_0=k_0(M)$, $h_0=h_M(k_0)$, $h_1:=h_M(k_0+1)$. Then:
$$k_0 \leq \qdepth(h_M) \leq k_0+\frac{h_1}{h_0}.$$
\end{prop}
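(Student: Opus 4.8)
\emph{Proof proposal.} The plan is to read off both inequalities directly from Definition \ref{d2} by evaluating the quantities $\beta^d_k(h_M)$ from \eqref{betamk} at the two smallest relevant indices $k=k_0$ and $k=k_0+1$; no machinery beyond \eqref{betamk} is needed.

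For the lower bound I would test $d=k_0$ in Definition \ref{d2}. By \eqref{betamk} the only index with $k_0\leq k\leq d$ is $k=k_0$ itself, and there $\beta^{k_0}_{k_0}(h_M)=\binom{0}{0}h_M(k_0)=h_0$; since $M_{k_0}\neq 0$ by definition of $k_0(M)$ we have $h_0\geq 1>0$, while $\beta^{k_0}_k(h_M)=0$ for $k<k_0$ by convention. Hence $d=k_0$ satisfies the defining condition and $\qdepth(h_M)\geq k_0$ (consistent with $k_0(h_M)=k_0(M)$ noted in Remark \ref{r22}).

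For the upper bound set $d:=\qdepth(h_M)$. If $d=k_0$ there is nothing to prove, as $h_1\geq 0$ and $h_0>0$; otherwise $d\geq k_0+1$ and the defining property forces $\beta^d_{k_0+1}(h_M)\geq 0$. Expanding \eqref{betamk}, only $j\in\{k_0,k_0+1\}$ contribute, so
$$\beta^d_{k_0+1}(h_M)=-\binom{d-k_0}{1}h_0+\binom{d-k_0-1}{0}h_1=h_1-(d-k_0)h_0.$$
The inequality $h_1-(d-k_0)h_0\geq 0$, divided by $h_0>0$, gives $d-k_0\leq h_1/h_0$, i.e. $d\leq k_0+h_1/h_0$, which is the claim.

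The computation is elementary and I do not expect a genuine obstacle; the only things to be careful about are (i) invoking the convention $\beta^d_k(h_M)=0$ for $k<k_0$ so that the choice $d=k_0$ is admissible, and (ii) recognising that among all the constraints defining $\qdepth(h_M)$ it is precisely the single one at $k=k_0+1$ that produces the upper bound, the other constraints being either trivial or, for such small $d$, weaker.
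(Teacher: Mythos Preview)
Your argument is correct: the lower bound comes from the single admissible index $k=k_0$ at $d=k_0$, and the upper bound from the constraint $\beta^d_{k_0+1}(h_M)=h_1-(d-k_0)h_0\geq 0$. Both computations are accurate and the case split $d=k_0$ versus $d\geq k_0+1$ is handled properly.

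The paper itself does not actually carry out this computation; its entire proof is a reference to \cite[Proposition~1.5]{lucrare7}. What you have written is presumably the content of that cited result (or at least the relevant special case), spelled out from \eqref{betamk} and Definition~\ref{d2}. So your approach is not really different in spirit---it is the same elementary observation that only the values $h_0,h_1$ matter for the first two $\beta$'s---but it has the advantage of being self-contained within the present paper rather than deferring to an external source.
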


\begin{proof}
It follows from \cite[Proposition 1.5]{lucrare7}.
\end{proof}

Let $M$ be a nonzero graded $A$-module of finite length, i.e. $\dim_K(M)<\infty$. It follows that
there exists $k_f(M)\geq k_0(M)$ such that 
$$k_f(M):=\max\{k\;:\;M_k\neq 0\}.$$
Note that $k_f(M)\leq k_f(h_M)$. Hence, from \cite[Proposition 1.5]{lucrare7} we conclude that:

\begin{prop}\label{p23}
If $M$ is a nonzero graded $A$-module of finite length, then 
$$\qdepth(h_M)\leq k_f(M).$$
\end{prop}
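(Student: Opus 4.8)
The plan is to show that for every integer $d\geq k_f+1$, where $k_f:=k_f(M)$, the sign conditions $\beta^d_k(h_M)\geq 0$ ($k\leq d$) appearing in Definition \ref{d2} cannot all hold; by that definition this immediately yields $\qdepth(h_M)\leq k_f$ (and the relevant set of $d$'s is nonempty by Proposition \ref{p22}, so the maximum exists). The mechanism is to convert those sign conditions, via the inversion formula \eqref{alfamk}, into an equality of polynomials whose two sides have incompatible degrees.

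The argument then runs as follows. Fix $d\geq k_f+1$ and assume, for contradiction, that $\beta^d_k(h_M)\geq 0$ for all $k\leq d$. Put $k_0:=k_0(M)$ and $h_0:=h_M(k_0)>0$. Since $M$ has finite length, $h_M(k)=0$ for every $k>k_f$, and from \eqref{betamk} one has $\beta^d_{k_0}(h_M)=h_M(k_0)=h_0$ (using $k_0\leq k_f<d$). Multiplying \eqref{alfamk} by $t^k$, summing over $k_0\leq k\leq d$, interchanging the two summations, and using $\sum_{k=j}^{d}\binom{d-j}{k-j}t^k=t^j(1+t)^{d-j}$, I get
$$\sum_{k=k_0}^{d}h_M(k)\,t^k=\sum_{j=k_0}^{d}\beta^d_j(h_M)\,t^j(1+t)^{d-j}.$$
The left-hand side is a polynomial of degree at most $k_f$, because $h_M(k)=0$ for $k_f<k\leq d$. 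On the right-hand side every term $\beta^d_j(h_M)\,t^j(1+t)^{d-j}$ has nonnegative coefficients, while the coefficient of $t^d$ equals $\sum_{j=k_0}^{d}\beta^d_j(h_M)\geq\beta^d_{k_0}(h_M)=h_0>0$; hence the right-hand side has degree exactly $d\geq k_f+1$. This is the desired contradiction.

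The only delicate point is that \eqref{alfamk} is asserted only in the range $k\leq d$, so the displayed identity must be read as a genuine polynomial identity that is legitimate precisely because finite length forces all omitted values $h_M(k)$ with $k>d$ to vanish as well; once that is granted, the rest is pure degree bookkeeping, with the nonnegativity of the $\beta^d_j(h_M)$ doing the essential work of preventing cancellation in the top coefficient on the right. It is also worth checking the boundary case $k_0=k_f$, where $M$ is concentrated in a single degree: the same computation applies, and in combination with Proposition \ref{p22} it even pins down $\qdepth(h_M)=k_0$ in that case.
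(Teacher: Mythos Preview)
Your argument is correct. The generating-function identity
\[
\sum_{k=k_0}^{d}h_M(k)\,t^k \;=\; \sum_{j=k_0}^{d}\beta^d_j(h_M)\,t^j(1+t)^{d-j}
\]
follows exactly as you say from \eqref{alfamk} by interchanging the two finite sums, and the comparison of degrees under the hypothesis $\beta^d_j(h_M)\geq 0$ gives the contradiction cleanly. Your side remark about \eqref{alfamk} being valid only for $k\leq d$ is in fact a non-issue here: you only ever invoke it in that range, so no extra justification is needed.

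The paper's own proof is not an argument at all but a one-line citation: it observes that $k_f(M)\leq k_f(h_M)$ and then appeals to \cite[Proposition~1.5]{lucrare7}, which proves the analogous inequality $\qdepth(h)\leq k_f(h)$ for an arbitrary nonnegative function $h$ of finite support. Your route is genuinely different in that it is self-contained and exposes the mechanism (nonnegativity of the $\beta^d_j$ forces the polynomial on the right to have full degree $d$, while finite length caps the left side at degree $k_f$). What the paper's approach buys is brevity and uniformity, since the same external proposition is also used for Proposition~\ref{p22}; what your approach buys is independence from the companion paper and an argument that a reader can verify on the spot.
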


\begin{cor}\label{c23}
Let $I\subset S=K[x_1,\ldots,x_n]$ be an Artinian homogeneous ideal. 
Then
 $$\qdepth(h_{S/I})\leq \reg(S/I).$$
\end{cor}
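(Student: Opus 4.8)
The plan is to reduce the statement to Proposition \ref{p23}. Since $I$ is an Artinian homogeneous ideal, $S/I$ is a nonzero graded $S$-module of finite length, so Proposition \ref{p23} applies and gives
$$\qdepth(h_{S/I})\leq k_f(S/I)=\max\{k\;:\;(S/I)_k\neq 0\}.$$
Hence it suffices to prove that $k_f(S/I)\leq\reg(S/I)$; in fact equality holds.

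For this I would invoke the local cohomology description of the Castelnuovo--Mumford regularity, namely $\reg(S/I)=\max_{i\geq 0}\{a_i(S/I)+i\}$, where $a_i(S/I)=\max\{j\;:\;\H^i_{\me}(S/I)_j\neq 0\}$ (with the convention $a_i=-\infty$ when $\H^i_{\me}(S/I)=0$) and $\me=(x_1,\ldots,x_n)$. Because $\dim(S/I)=0$, we have $\H^i_{\me}(S/I)=0$ for all $i>0$, while $\H^0_{\me}(S/I)=S/I$, since every homogeneous element of $S/I$ is annihilated by a power of $\me$. Therefore only the index $i=0$ contributes, and $\reg(S/I)=a_0(S/I)=\max\{j\;:\;(S/I)_j\neq 0\}=k_f(S/I)$. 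Combined with the displayed inequality, this yields $\qdepth(h_{S/I})\leq\reg(S/I)$.

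There is essentially no obstacle here: the only step that is a citation rather than a computation is the identification $\reg(M)=k_f(M)$ for a finite length graded module $M$, which is a standard consequence of the above. If one prefers to avoid local cohomology, the inequality $k_f(S/I)\leq\reg(S/I)$ can instead be read off from the minimal graded free resolution of $S/I$, but the local cohomology route is the shortest.
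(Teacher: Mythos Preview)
Your proof is correct and matches the paper's approach exactly: apply Proposition~\ref{p23} and then identify $k_f(S/I)$ with $\reg(S/I)$ for an Artinian quotient. The paper simply cites \cite[Theorem~18.4]{peeva} for this identification, whereas you derive it from the local cohomology description of regularity, which amounts to the same fact.
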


\begin{proof}
According to \cite[Theorem 18.4]{peeva}, we have that 
$$\reg(S/I)=\max\{k\;:\;(S/I)_k\neq 0\}.$$
The conclusion follows from Proposition \ref{p23}.
\end{proof}

\begin{obs}\rm
Assume $K$ is a field of characteristic zero and let $I\subset S=K[x_1,\ldots,x_n]$ be a homogeneous ideal. Let
$J:=\Gin(I)$ be the generic initial ideal of $I$ with respect to the reverse lexicographic order. It is well known
that $S/I$ and $S/J$ have the same Hilbert function. 
Moreover, according to Bayer and Stillman \cite{bayer}, we have 
$$\reg(S/I)=\reg(S/J).$$
On the other hand, according to Galligo \cite{galligo}, $J:=\Gin(I)$ is strongly stable, hence from the well known result of
Eliahou and Kervaire \cite{EK}, it follows that
$$\reg(S/I)=\reg(S/J)=\max\{\deg(u)\;:\;u\in G(J)\}-1,$$
where $G(J)$ is the minimal set of monomial generators of $J$.

In conclusion, $\qdepth(h_{S/I})\leq \reg(S/I)$ if and only if $\qdepth(h_{S/J})\leq \reg(S/J)$.

The result from Corollary \ref{c23} cannot be extended in general. For instance, the ideal $J=(x_1^3)\subset S=K[x_1,x_2,x_3]$ is
strongly stable and its regularity is $\reg(S/J)=3-1=2$, while $\qdepth(h_{S/J})=3$.
\end{obs}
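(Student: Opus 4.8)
The remark gathers three facts from the literature --- that $\Gin(I)$ has the same Hilbert function as $I$, that $\reg(S/I)=\reg(S/\Gin(I))$ by Bayer--Stillman \cite{bayer}, and the Eliahou--Kervaire expression for the regularity of the ideal $\Gin(I)$, which is strongly stable by Galligo's theorem --- all of which are quoted and need no argument. Only the displayed equivalence and the closing counterexample carry content, and I would handle them in turn.

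For the equivalence, the plan is to note that by Definition \ref{d2} and \eqref{betamk} the quantity $\qdepth(h_M)$ depends on $M$ only through its Hilbert function, since both $\beta^d_k(h_M)$ and $k_0(M)=k_0(h_M)$ are read off from the values $h_M(j)$ (see also Remark \ref{r22}). As $S/I$ and $S/\Gin(I)$ have the same Hilbert function, this gives $\qdepth(h_{S/I})=\qdepth(h_{S/\Gin(I)})$; combined with $\reg(S/I)=\reg(S/\Gin(I))$, the two inequalities $\qdepth(h_{S/I})\le\reg(S/I)$ and $\qdepth(h_{S/\Gin(I)})\le\reg(S/\Gin(I))$ become one and the same statement, so the equivalence is automatic. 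Its content is reductive: to decide whether Corollary \ref{c23} extends beyond the Artinian case, it suffices to test strongly stable ideals.

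For the counterexample I would take $J=(x_1^3)\subset S=K[x_1,x_2,x_3]$ and first compute its Hilbert function. Subtracting the contribution of the single relation $x_1^3$ from that of $S$ gives $h_{S/J}(0)=1$ and $h_{S/J}(k)=\binom{k+2}{2}-\binom{k-1}{2}=3k$ for $k\ge 1$. The upper bound is then free: with $k_0=0$, $h_0=1$, $h_1=3$, Proposition \ref{p22} yields $\qdepth(h_{S/J})\le k_0+\tfrac{h_1}{h_0}=3$. For the reverse inequality I would evaluate $\beta^3_k(h_{S/J})$ directly from \eqref{betamk} for $k=0,1,2,3$, obtaining the values $1,0,3,5$; all being nonnegative, Definition \ref{d2} gives $\qdepth(h_{S/J})\ge 3$, hence $\qdepth(h_{S/J})=3$. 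Since $x_1^3$ is the unique minimal generator, the Eliahou--Kervaire formula recalled in the remark gives $\reg(S/J)=3-1=2$, so $\qdepth(h_{S/J})=3>2=\reg(S/J)$, exhibiting the promised failure of Corollary \ref{c23} for a non-Artinian strongly stable ideal.

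There is no genuine obstacle. Once the equivalence is recognized as a tautology forced by the equality of Hilbert functions and of regularities, the entire substance reduces to the four binomial sums for $J=(x_1^3)$, where Proposition \ref{p22} supplies the upper bound and \eqref{betamk} the lower one. The only delicate point is the degree-$0$ value $h_{S/J}(0)=1$: it is precisely this term that makes $k_0+h_1/h_0=3$, and hence the bound of Proposition \ref{p22}, sharp.
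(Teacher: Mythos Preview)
Your proposal is correct and matches the paper's (implicit) reasoning: the remark itself offers no argument beyond asserting that $\reg(S/J)=2$ and $\qdepth(h_{S/J})=3$, and your computation of $h_{S/J}$, the upper bound via Proposition~\ref{p22}, and the four values $\beta^3_k(h_{S/J})=1,0,3,5$ are precisely the details one supplies to verify those assertions.
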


\begin{prop}\label{exact}
Let $0\to U\to M\to N\to 0$ be a short exact sequence of (nonzero) graded $A$-modules. Then:
$$\qdepth(h_M)\geq \min\{\qdepth(h_U),\qdepth(h_N)\}.$$
\end{prop}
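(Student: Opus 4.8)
The plan is to reduce everything to the additivity of Hilbert functions along a short exact sequence and then to the monotonicity of the $\beta^d_k$ under adding nonnegative functions. From $0\to U\to M\to N\to 0$ we get $h_M(k)=h_U(k)+h_N(k)$ for all $k\in\mathbb Z$, since dimension is additive on short exact sequences of vector spaces in each graded degree. Because $\beta^d_k$ is a fixed $\mathbb Z$-linear combination of the values $h(j)$ for $j\le k$ (see \eqref{betamk}), we immediately obtain
$$\beta^d_k(h_M)=\beta^d_k(h_U)+\beta^d_k(h_N)\text{ for all }d\text{ and all }k\le d.$$
So the core of the argument is purely about the numerical functions $h_U$ and $h_N$.

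Next I would set $d_0:=\min\{\qdepth(h_U),\qdepth(h_N)\}$. By Definition \ref{d2} applied to each summand, $\beta^{d_0}_k(h_U)\ge 0$ for all $k\le d_0$ and $\beta^{d_0}_k(h_N)\ge 0$ for all $k\le d_0$; here I use Remark \ref{r22} (equivalently \cite[Corollary 1.4]{lucrare7}) to guarantee that nonnegativity persists when the parameter $d$ is lowered from $\qdepth(h_U)$, resp.\ $\qdepth(h_N)$, down to $d_0$. Adding these, $\beta^{d_0}_k(h_M)=\beta^{d_0}_k(h_U)+\beta^{d_0}_k(h_N)\ge 0$ for all $k\le d_0$, which by Definition \ref{d2} gives $\qdepth(h_M)\ge d_0$, as desired.

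One detail to handle carefully is the bookkeeping of the starting index $k_0$: in general $k_0(M)=\min\{k_0(U),k_0(N)\}$, and $k_0(U),k_0(N)$ need not coincide. This is harmless because the convention in \eqref{betamk} is that $h(j)=0$ for $j<k_0$ and the sums in $\beta^d_k$ run from the relevant $k_0$; extending each sum formally down to $\min\{k_0(U),k_0(N)\}$ changes nothing since the extra terms are zero. So the additivity $\beta^d_k(h_M)=\beta^d_k(h_U)+\beta^d_k(h_N)$ holds verbatim once all three are written with a common lower summation limit. I do not anticipate a genuine obstacle here; the only mild subtlety, and the one point I would state explicitly, is the invocation of Remark \ref{r22} to move the parameter $d$ down to the common value $d_0$ before adding — without it, the two nonnegativity statements would live at different parameters and could not be summed termwise.
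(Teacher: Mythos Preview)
Your proof is correct and follows essentially the same approach as the paper: additivity of Hilbert functions on the short exact sequence, then linearity of $\beta^d_k$ in $h$ together with the downward monotonicity in $d$ (Remark \ref{r22}) to work at the common level $d_0$. The paper's proof simply cites \cite[Proposition 1.10]{lucrare7} for this numerical step, whereas you have unpacked that argument explicitly.
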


\begin{proof}
It follows from the fact that $h_{M}(k)=h_U(k)+h_N(k)$ for all $k\in\mathbb Z$
and \cite[Proposition 1.10]{lucrare7}.
\end{proof}

\begin{prop}\label{ciuciu}
Let $M$ be a graded $A$-module and let $r>0$ be an integer.
Then 
$$\qdepth(h_{M^{\oplus r}})=\qdepth(h_M).$$
\end{prop}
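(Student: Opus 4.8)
The plan is to reduce everything to the defining formula \eqref{betamk} via the obvious identity on Hilbert functions. Since $M^{\oplus r}=\bigoplus_{i=1}^r M$, we have $h_{M^{\oplus r}}(k)=r\cdot h_M(k)$ for every $k\in\mathbb Z$, and in particular $k_0(M^{\oplus r})=k_0(M)$ because $r>0$ forces $M^{\oplus r}_k\neq 0$ exactly when $M_k\neq 0$.

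Next I would observe that the sum in \eqref{betamk} is linear in the Hilbert function: for any integer $d$ and any $k$ with $k_0(M)\le k\le d$,
$$\beta^d_k(h_{M^{\oplus r}})=\sum_{j=k_0(M)}^k (-1)^{k-j}\binom{d-j}{k-j}h_{M^{\oplus r}}(j)=r\sum_{j=k_0(M)}^k (-1)^{k-j}\binom{d-j}{k-j}h_M(j)=r\,\beta^d_k(h_M),$$
and both quantities are $0$ outside that range by the same convention. Since $r>0$, this gives the equivalence $\beta^d_k(h_{M^{\oplus r}})\ge 0\iff\beta^d_k(h_M)\ge 0$ for all relevant $k$.

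Finally, plugging this equivalence into Definition \ref{d2}, the two sets $\{d:\beta^d_k(h_{M^{\oplus r}})\ge 0\text{ for all }k\le d\}$ and $\{d:\beta^d_k(h_M)\ge 0\text{ for all }k\le d\}$ coincide, so their maxima agree, which is exactly the claimed equality $\qdepth(h_{M^{\oplus r}})=\qdepth(h_M)$. There is essentially no obstacle here: the only point to be careful about is that $r>0$ is used twice — once to match the initial degrees $k_0$, and once to preserve the sign of each $\beta$-number — but both are immediate. (One could alternatively derive the inequality $\qdepth(h_{M^{\oplus r}})\ge\qdepth(h_M)$ by induction on $r$ from Proposition \ref{exact}, but that route does not yield the reverse inequality, so the direct computation above is preferable.)
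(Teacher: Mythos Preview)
Your proof is correct and follows essentially the same approach as the paper: both start from the identity $h_{M^{\oplus r}}(k)=r\cdot h_M(k)$, and the paper then invokes \cite[Proposition~1.11]{lucrare7} for the conclusion, which is precisely the linearity-and-sign argument you spell out directly.
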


\begin{proof}
Since $h_{M^{\oplus r}}(k)=r\cdot h_M(k)$ for all $k\in\mathbb Z$,
the conclusion follows from \cite[Proposition 1.11]{lucrare7}.
\end{proof}

If $M$ is a graded $A$-module and $m$ is an integer, then 
$$M(m) = \bigoplus_{k\in\mathbb Z} M(m)_k = \bigoplus_{k\in\mathbb Z} M_{m+k}$$
is the $m$-th shift module of $M$.

\begin{prop}\label{shif}
Let $M$ be a nonzero graded $A$-module and $m\in\mathbb Z$. Then:
\begin{enumerate}
\item[(1)] $k_0(M(m))=k_0(M)-m$.
\item[(2)] If $\dim_K(M)<\infty$ then $k_f(M(m))=k_f(M)-m$.
\item[(3)] $\qdepth(h_{M(m)})=\qdepth(h_M)-m$.
\end{enumerate}
\end{prop}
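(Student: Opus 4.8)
The plan is to reduce everything to the defining identity $h_{M(m)}(k)=h_M(k+m)$ and then track the index shift through the formula \eqref{betamk} for the $\beta$'s.

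First, parts (1) and (2) are immediate from the definition of the shift. Since $M(m)_k=M_{m+k}$ for all $k$, we have $M(m)_k\neq 0$ if and only if $M_{m+k}\neq 0$. Hence the smallest index $k$ with $M(m)_k\neq 0$ is the one solving $m+k=k_0(M)$, giving $k_0(M(m))=k_0(M)-m$; and when $\dim_K(M)<\infty$, the largest such index solves $m+k=k_f(M)$, giving $k_f(M(m))=k_f(M)-m$.

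For (3), the key step is to establish the identity
$$\beta^d_k(h_{M(m)})=\beta^{d+m}_{k+m}(h_M)\quad\text{for all }k\leq d.$$
Indeed, by (1) we have $k_0(M(m))=k_0(M)-m$, so for $k_0(M)-m\leq k\leq d$ one substitutes $j\mapsto j-m$ in the sum of \eqref{betamk} defining $\beta^d_k(h_{M(m)})$ and uses $h_{M(m)}(j)=h_M(j+m)$; the summation index then runs from $k_0(M)$ to $k+m$, while the binomial coefficient and the sign become $\binom{(d+m)-(j+m)}{(k+m)-(j+m)}$ and $(-1)^{(k+m)-(j+m)}$, which is precisely $\beta^{d+m}_{k+m}(h_M)$. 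Outside that range, i.e. for $k<k_0(M)-m$ (equivalently $k+m<k_0(h_M)$), both sides vanish by the "otherwise" branch of \eqref{betamk}.

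Finally, using this identity, the condition "$\beta^d_k(h_{M(m)})\geq 0$ for all $k\leq d$" is equivalent, via $k'=k+m$, to "$\beta^{d+m}_{k'}(h_M)\geq 0$ for all $k'\leq d+m$". Taking the maximum over admissible $d$ on the left corresponds to taking the maximum over $d'=d+m$ on the right, so $\qdepth(h_{M(m)})=\qdepth(h_M)-m$, as desired. There is no substantial obstacle here beyond the bookkeeping of the shift; the only point requiring care is matching the lower summation bounds $k_0$ of the two expressions, which is exactly what part (1) supplies.
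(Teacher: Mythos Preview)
Your proof is correct and essentially follows the same approach as the paper: both rely on the identity $h_{M(m)}(k)=h_M(k+m)$ and reduce to a purely combinatorial shift in the defining formula \eqref{betamk}. The paper simply defers to \cite[Proposition 1.12]{lucrare7} for parts (1) and (3), whereas you spell out the re-indexing $\beta^d_k(h_{M(m)})=\beta^{d+m}_{k+m}(h_M)$ explicitly; the content is the same, your version is just self-contained.
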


\begin{proof}
(1) and (3) Since $h_{M(m)}(k)=h_M(k+m)$ for all $k\in\mathbb Z$,
the conclusion follows from \cite[Proposition 1.12]{lucrare7}.

(2) It is obvious.
\end{proof}

\begin{obs}\rm
Let $h:\mathbb Z\to\mathbb Z_{\geq 0}$ such that $h(j)=0$ for $j\ll 0$. Let $k_0=\min\{j\;:\;h(j)>0\}$ and 
$c=\left\lfloor \frac{h(k_0+1)}{h(k_0)}\right\rfloor$. Let $S:=K[x_1,\ldots,x_n]$ and $\mathbf m=(x_1,\ldots,x_n)$.
We claim that there exists an Artinian $S$-module $M$ such that
\begin{equation}\label{hmj}
h_M(j)=\begin{cases} h(j),& k_0\leq j\leq k_0+c \\ 0,&\text{ otherwise} \end{cases}.
\end{equation}
Indeed, we can take 
$$M:=(S/\mathbf m)(-k_0)^{h(k_0)}\oplus (S/\mathbf m)(-k_0-1)^{h(k_0+1)}\oplus \cdots \oplus
(S/\mathbf m)(-k_0-c)^{h(k_0+c)}.$$
From \eqref{hmj} it is easy to deduce that 
$$\qdepth(h)=\qdepth(h_M).$$ 
Note that $M$ is in fact a graded $K$-vector space
of finite dimension.
\end{obs}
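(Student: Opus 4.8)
The plan is to verify the two assertions of the remark in turn: first that the displayed module $M$ realizes the Hilbert function \eqref{hmj} and is Artinian, and then that this forces $\qdepth(h)=\qdepth(h_M)$.

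First I would record that $S/\mathbf m\cong K$ is concentrated in degree $0$, so that for each integer $t$ the shifted module $(S/\mathbf m)(-t)$ is a single copy of $K$ sitting in degree $t$; indeed, by the shift convention one has $((S/\mathbf m)(-t))_j=(S/\mathbf m)_{j-t}$, which is nonzero exactly when $j=t$. Hence each summand $(S/\mathbf m)(-k_0-i)^{h(k_0+i)}$ contributes a $K$-vector space of dimension $h(k_0+i)$ in degree $k_0+i$ and nothing elsewhere. Summing over $0\leq i\leq c$ gives $h_M(j)=h(j)$ for $k_0\leq j\leq k_0+c$ and $h_M(j)=0$ otherwise, which is exactly \eqref{hmj}. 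Since $M$ is a direct sum of finitely many copies of $K$, it is a finite-dimensional graded $K$-vector space annihilated by $\mathbf m$; in particular it has finite length and is therefore Artinian.

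For the equality of the Hilbert depths, the key point is that both invariants are squeezed into the same short window. Applying the inequality underlying Proposition \ref{p22}, namely \cite[Proposition 1.5]{lucrare7}, which is valid for an arbitrary function vanishing for $j\ll 0$, both to the function $h$ and to the module $M$, and using that $h$ and $h_M$ share the same $k_0$, the same value $h_0=h(k_0)$ and the same value $h_1=h(k_0+1)$, I obtain
$$k_0\leq\qdepth(h)\leq k_0+\frac{h_1}{h_0}\quad\text{and}\quad k_0\leq\qdepth(h_M)\leq k_0+\frac{h_1}{h_0}.$$
Since $\qdepth(h)$ and $\qdepth(h_M)$ are integers and $c=\left\lfloor h_1/h_0\right\rfloor$, both invariants lie in the integer interval $[k_0,k_0+c]$.

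Finally I would observe that for any $d\leq k_0+c$ and any $k\leq d$ the quantity $\beta^d_k$ defined in \eqref{betamk} involves only the values $h(j)$ with $k_0\leq j\leq k\leq d\leq k_0+c$, a range on which $h$ and $h_M$ coincide by \eqref{hmj}; consequently $\beta^d_k(h)=\beta^d_k(h_M)$ for all such $d$ and $k$. Therefore the defining condition ``$\beta^d_k\geq 0$ for all $k\leq d$'' selects exactly the same values of $d$ for $h$ and for $h_M$ throughout the window $[k_0,k_0+c]$, and since by the previous paragraph both maxima are attained in this window, I conclude $\qdepth(h)=\qdepth(h_M)$. The only point that genuinely requires care is the passage from the non-integral bound $k_0+h_1/h_0$ to the integral bound $k_0+c$: this is precisely why the truncation at $k_0+c$ loses no information, the choice of $c$ as the floor being exactly calibrated to the integral upper bound on the Hilbert depth; everything else is bookkeeping.
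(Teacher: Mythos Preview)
Your argument is essentially the natural elaboration of what the paper leaves implicit (the paper merely says ``it is easy to deduce''), and the strategy---bound both depths into the window $[k_0,k_0+c]$ and then observe that all $\beta^d_k$ with $d\leq k_0+c$ depend only on values where $h$ and $h_M$ agree---is correct.

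There is one small imprecision. You assert that $h$ and $h_M$ share the same value $h_1=h(k_0+1)$, but this fails when $c=0$: in that case $M=(S/\mathbf m)(-k_0)^{h(k_0)}$ and $h_M(k_0+1)=0$, which need not equal $h(k_0+1)$. The fix is painless: either observe that $h_M(k_0+1)\leq h(k_0+1)$ always, so the displayed upper bound $k_0+h_1/h_0$ for $\qdepth(h_M)$ still holds (indeed is weaker than the true bound), or simply invoke the finite-length bound of Proposition~\ref{p23} to get $\qdepth(h_M)\leq k_f(M)\leq k_0+c$ directly. Either way your concluding paragraph goes through unchanged.
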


\section{Hilbert depth of the Hilbert series of a free $S$-module}

Let $a\in\mathbb C$ and $j$ a nonnegative integer. We denote $(a)_j=a(a+1)\cdots(a+j-1)$, the \emph{Pochhammer symbol}.
The \emph{hypergeometric function} is
$${}_2F_1(a,b,c;z)=\sum_{j\geq 0}\frac{(a)_j(b)_j}{(c)_j}\cdot \frac{z^j}{j!}.$$
First, we prove the following lemma:

\begin{lema}\label{lemuta}
Let $n\geq 1$ be an integer. Then:
\begin{enumerate}
\item[(1)] ${}_2F_1(0,n,-n;-1)=1$ and ${}_2F_1(-1,n,-n;-1)=0$.
\item[(2)] $(-1)^k{}_2F_1(-k,n,-n;-1)>0$ for any $2\leq k\leq n$.
\end{enumerate}
\end{lema}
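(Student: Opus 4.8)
The plan is to treat part (1) by direct evaluation of the terminating series and part (2) by converting the alternating sum into a finite difference, and then into a manifestly positive integral. For part (1), since the first parameter is a nonpositive integer the series ${}_2F_1(a,n,-n;-1)$ terminates. When $a=0$ only the $j=0$ term survives because $(0)_j=0$ for $j\ge 1$, giving the value $1$. When $a=-1$ only the $j=0$ and $j=1$ terms survive; the $j=0$ term is $1$ and the $j=1$ term is $\frac{(-1)_1(n)_1}{(-n)_1}\cdot\frac{-1}{1!}=\frac{(-1)n}{-n}\cdot(-1)=-1$, so the sum is $0$.

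For part (2), I would first compute the general term. Using $(-k)_j=(-1)^j k!/(k-j)!$, $(n)_j=(n+j-1)!/(n-1)!$ and $(-n)_j=(-1)^j n!/(n-j)!$ (all valid for $0\le j\le k\le n$), the $j$-th term of ${}_2F_1(-k,n,-n;-1)$ equals $(-1)^j\binom{k}{j}d_j$, where
$$d_j:=\frac{(n+j-1)!\,(n-j)!}{(n-1)!\,n!}=\prod_{i=1}^{j-1}\frac{n+i}{n-i}.$$
Because $k\le n$, the series stops at $j=k$ before ever reaching the pole of $1/(-n)_j$, so $(-1)^k{}_2F_1(-k,n,-n;-1)=\sum_{j=0}^k(-1)^{k-j}\binom{k}{j}d_j$ is precisely the $k$-th finite difference of the sequence $(d_j)$ at $0$.

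Next I would feed in the Beta integral $d_j=\frac{(2n)!}{(n-1)!\,n!}\int_0^1 t^{n-1}(1-t)^n\bigl(\tfrac{t}{1-t}\bigr)^j\,dt$ (this is $B(n+j,n-j+1)$ rescaled, legitimate because $j\le n$). Summing against $(-1)^{k-j}\binom{k}{j}$ and using $\sum_{j=0}^k(-1)^{k-j}\binom{k}{j}x^j=(x-1)^k$ with $x=t/(1-t)$, the factor $(1-t)^n$ absorbs the denominator and yields
$$(-1)^k{}_2F_1(-k,n,-n;-1)=\frac{(2n)!}{(n-1)!\,n!}\int_0^1 t^{n-1}(1-t)^{n-k}(2t-1)^k\,dt,$$
an integral that converges exactly because $n-k\ge 0$. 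The substitution $s=2t-1$ turns the right-hand side into a positive constant times $\int_{-1}^1(1+s)^{n-1}(1-s)^{n-k}s^k\,ds$, and folding the $[-1,0]$ part onto $[0,1]$ via $s\mapsto-s$ gives $\int_0^1 s^k\bigl[(1+s)^{n-1}(1-s)^{n-k}+(-1)^k(1-s)^{n-1}(1+s)^{n-k}\bigr]\,ds$. For $k$ even the integrand is visibly positive on $(0,1)$; for $k$ odd the bracket factors as $(1-s^2)^{n-k}\bigl[(1+s)^{k-1}-(1-s)^{k-1}\bigr]$, which is positive on $(0,1)$ since $1+s>1-s>0$ and $k-1\ge 1$. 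Either way the integral is strictly positive, finishing part (2).

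The main obstacle I anticipate is pinning down the correct integral representation of $d_j$ and verifying that every reindexing step is valid over the whole range $0\le j\le k\le n$ — in particular, checking that the hypothesis $k\le n$ is exactly what both keeps the hypergeometric series terminating before the pole of $1/(-n)_j$ and keeps $\int_0^1 t^{n-1}(1-t)^{n-k}(2t-1)^k\,dt$ convergent. Once the integral formula is established, the parity case split and the folding argument are routine.
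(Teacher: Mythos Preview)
Your argument is correct. Part (1) is the same direct computation as the paper's. Part (2), however, takes a genuinely different route. The paper rewrites $(-1)^k{}_2F_1(-k,n,-n;-1)$, up to a positive factor, as $E(n,k)=f_k^{(k)}(1)$ with $f_k(x)=(2-x)^{-n}x^{-(n-k+1)}$ via the Leibniz rule, then substitutes $g_k(x)=f_k(1-x)=(1-x)^{k-1}/(1-x^2)^n$ and uses the recursion $g_k(x)=(1-x)g_{k-1}(x)$ together with the explicit Taylor expansion of $g_1$ to prove, by induction on $k\ge 2$, that every Taylor coefficient $(-1)^j g_k^{(j)}(0)$ is positive. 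Your approach instead packages $d_j$ as a Beta integral, collapses the alternating binomial sum via $(x-1)^k$, and lands on the closed form $(-1)^k{}_2F_1(-k,n,-n;-1)=\frac{(2n)!}{(n-1)!\,n!}\int_0^1 t^{n-1}(1-t)^{n-k}(2t-1)^k\,dt$, whose positivity follows from a clean parity split after the substitution $s=2t-1$. Your method is shorter, yields an explicit integral formula, and makes the hypothesis $k\le n$ visibly necessary (it is exactly the convergence condition for both the Beta integral for $d_j$ and the final integral); the paper's derivative--recurrence method is integral-free and in fact proves a stronger statement along the way (sign-alternation of \emph{all} Taylor coefficients of $g_k$, not just the $k$-th), but at the cost of a more elaborate induction.
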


\begin{proof}
(1) It is obvious from the definition of the hypergeometric function.

(2) Since ${}_2F_1(-k,n,-n;-1)=\sum\limits_{j=0}^k (-1)^j \binom{k}{j} \frac{(n)_j}{(n-j+1)_j}$, 
    in order to prove (2), it is enough to show that for any $n\geq k\geq 2$ we have that:
\begin{equation}\label{enk}
E(n,k):=  \sum_{j=0}^k (-1)^{k-j} \binom{k}{j} (n)_j (n-k+1)_{k-j}  > 0.
\end{equation}
We consider the functions 
$$f_{1,k},f_{2,k}:(0,2) \to \mathbb R,\;f_{1,k}(x)=\frac{1}{(2-x)^{n}},\;f_{2,k}(x)=\frac{1}{x^{n-k+1}}.$$
By straightforwards computation, for all $0\leq j\leq k$ we have that
\begin{equation}\label{f12k}
f_{1,k}^{(j)}(x)=\frac{(n)_j}{(2-x)^{n+j}}\text{ and }f_{2,k}^{(k-j)}(x)=\frac{(-1)^{k-j}(n-k+1)_{k-j}}{x^{n-j+1}},
\end{equation}
where $f^{(j)}$ denotes the $j$-th derivative of the function $f$. Let 
\begin{equation}\label{fk}
f_k:(0,2)\to\mathbb R,\; f_k(x):=f_{1,k}(x)f_{2,k}(x), \; x\in(0,2).
\end{equation}
From \eqref{enk}, \eqref{f12k}, \eqref{fk} and the chain rule of derivatives, it follows that 
\begin{equation}\label{kkk}
E(n,k) = f_k^{(k)}(1).
\end{equation}
We consider the function 
$$g_k:(-1,1)\to\mathbb R,\; g_k(x)=f_k(1-x)=\frac{1}{(1+x)^n(1-x)^{n-k+1}}=\frac{(1-x)^{k-1}}{(1-x^2)^n}.$$
Since $g_k^{(k)}(x)=(-1)^k f_k^{(k)}(1-x)$, from \eqref{enk} and \eqref{kkk}, in order to complete the proof,
it is enough to prove that
\begin{equation}\label{gk0}
(-1)^k g_k^{(k)}(0) > 0\text{ for all }k\geq 2.
\end{equation}
If $k\geq 2$ and $j\geq 1$, then, using the identity $g_k(x)=(1-x)g_{k-1}(x)$, we deduce that
\begin{equation}\label{gkx}
g_k^{(j)}(x) = (1-x)g_{k-1}^{(j)}(x) - j g_{k-1}^{(j-1)}(x)\text{ for all }x\in(-1,1).
\end{equation}
For $k\geq 1$ and $j\geq 0$ we denote $c_k^{(j)}:=g_k^{(j)}(0)$. Since 
$$g_1(x)=\frac{1}{(1-x^2)^n}=\sum_{\ell=0}^{\infty}\binom{n+\ell-1}{\ell-1}x^{2\ell},$$
it follows that
\begin{equation}\label{pisi1}
c_1^{(j)}=\begin{cases} 0,& j=2\ell+1 \\ \binom{n+\ell-1}{\ell-1}(2\ell)!,&j=2\ell  \end{cases}
\end{equation}
Also, it is clear that
\begin{equation}\label{pisi2}
c_k^{(0)}=1\text{ for all }k\geq 1.
\end{equation}
On the other hand, from \eqref{gkx} it follows that
\begin{equation}\label{pisi3}
c_k^{(j)}=c_{k-1}^{(j)}-j c_{k-1}^{(j-1)}\text{ for all }k\geq 2,\;j\geq 1.
\end{equation}
From \eqref{pisi1}, \eqref{pisi2} and \eqref{pisi3}, using induction on $k\geq 2$, we
can easily deduce that
$$(-1)^jc_k^{(j)}>0\text{ for all }k\geq 2,\;j\geq 0.$$
In particular, it follows that 
$$(-1)^k c_k^{(k)} = (-1)^k g_k^{(k)}(0) > 0 \text{ for all }k\geq 2,$$
hence the proof is complete.
\end{proof}

\begin{teor}\label{qdeps}
Let $S:=K[x_1,\ldots,x_n]$. Then $\qdepth(h_S)=n$.
\end{teor}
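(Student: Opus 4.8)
The plan is to compute $\beta^n_k(h_S)$ in closed form, deduce its nonnegativity from Lemma \ref{lemuta}, and obtain the matching upper bound from Proposition \ref{p22}.

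Recall that $h_S(k)=\binom{n+k-1}{n-1}$ for $k\ge 0$ and $h_S(k)=0$ for $k<0$, so that $k_0(h_S)=0$, $h_S(0)=1$ and $h_S(1)=n$. Then $\qdepth(h_S)\le n$ is immediate from Proposition \ref{p22}, which gives $\qdepth(h_S)\le k_0+h_S(1)/h_S(0)=n$. Thus the real content is the reverse inequality $\qdepth(h_S)\ge n$; by Definition \ref{d2} (and since $\beta^n_k(h_S)=0$ for $k<0$) this amounts to showing $\beta^n_k(h_S)\ge 0$ for every $0\le k\le n$.

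To this end I would start from
\[
\beta^n_k(h_S)=\sum_{j=0}^k(-1)^{k-j}\binom{n-j}{k-j}\binom{n+j-1}{n-1},
\]
expand both binomial coefficients into factorials, split off $k!/((k-j)!\,j!)=\binom{k}{j}$, factor out $(-1)^k\binom{n}{k}$, and identify the leftover factor $(n-j)!\,(n+j-1)!/(n!\,(n-1)!)$ with $(n)_j/(n-j+1)_j$. The sum that remains is exactly the one evaluated in the proof of Lemma \ref{lemuta}, so this yields
\[
\beta^n_k(h_S)=(-1)^k\binom{n}{k}\,{}_2F_1(-k,n,-n;-1).
\]
Lemma \ref{lemuta} now finishes the argument: the factor $(-1)^k\,{}_2F_1(-k,n,-n;-1)$ equals $1$ for $k=0$, equals $0$ for $k=1$, and is strictly positive for $2\le k\le n$, while $\binom{n}{k}\ge 0$ throughout. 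Hence $\beta^n_k(h_S)\ge 0$ for all $0\le k\le n$, so $\qdepth(h_S)\ge n$, and together with the upper bound we conclude $\qdepth(h_S)=n$.

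The only slightly delicate point is the factorial bookkeeping leading to the displayed identity---in particular, tracking signs through $(-1)^{k-j}=(-1)^k(-1)^j$ and matching the Pochhammer symbols against the expression for ${}_2F_1(-k,n,-n;-1)$ used in Lemma \ref{lemuta}. Everything else is a direct appeal to results already established.
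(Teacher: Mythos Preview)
Your proof is correct and follows essentially the same approach as the paper: both obtain the upper bound from Proposition \ref{p22}, rewrite $\beta^n_k(h_S)$ as $(-1)^k\binom{n}{k}\,{}_2F_1(-k,n,-n;-1)$, and then invoke Lemma \ref{lemuta} for nonnegativity. Your exposition of the factorial manipulation is slightly more detailed than the paper's, but the argument is the same.
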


\begin{proof}
The Hilbert function of $S$ is $h_S(k)=\binom{n-1+k}{k}$ for all $k\geq k_0(S)=0$. Therefore, from \eqref{betamk}, we have
\begin{equation}\label{bdsk}
\beta_k^d(h_S)=\sum_{j=0}^k (-1)^{k-j} \binom{d-j}{k-j}\binom{n-1+j}{j}\text{ for all }0\leq k\leq d.
\end{equation}
Since $k_0(S)=0$, $h_S(0)=1$ and $h_S(1)=n$, from Proposition \ref{p22} we get that $\qdepth(h_S)\leq n$.
From \eqref{bdsk} we have that
\begin{equation}\label{bd1}
\beta_k^n(h_S)=\sum_{j=0}^k (-1)^{k-j} \binom{n-j}{k-j}\binom{n-1+j}{j}\text{ for all }0\leq k\leq n.
\end{equation}
From \eqref{bd1} if follows that
\begin{equation}\label{bd2}
\beta_k^n(h_S)=(-1)^k \binom{n}{k} {}_2 F_{1} (-k,n,-n;-1) \text{ for all }0\leq k\leq n,
\end{equation}
From \eqref{bd2} and Lemma \ref{lemuta} it follows that $\qdepth(h_S)\geq n$, as required.
\end{proof}

\begin{cor}\label{free}
Let $F=S(a)^{n_1}\oplus S(a-1)^{n_2} \oplus S(a_1)\oplus\cdots\oplus S(a_r)$ where $n_1,n_2,a,a_j$ are some integers such that $n_1>n_2\geq 0$ and 
$a\geq a_j+2$ for all $1\leq j\leq r$. Then $\qdepth(h_F)=n-a$.
\end{cor}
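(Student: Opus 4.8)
The plan is to reduce the general statement to the case $a=0$ by a shift, and then to exploit the short exact sequence structure together with the fact that $\qdepth(h_S)=n$, already established in Theorem \ref{qdeps}. First I would use Proposition \ref{shif}(3): if I can show $\qdepth(h_{F'})=n$ where $F'=F(a)=S^{n_1}\oplus S(-1)^{n_2}\oplus S(a_1-a)\oplus\cdots\oplus S(a_r-a)$, then $\qdepth(h_F)=\qdepth(h_{F'})-a = n-a$. Note that after the shift, $k_0(F')=0$, the degree-$0$ piece has dimension $h_0=n_1$ (since the summands $S(-1)^{n_2}$ and the $S(a_j-a)$ with $a_j-a\le -2$ contribute nothing in degree $0$), and the degree-$1$ piece has dimension $h_1 = n_1 n + n_2$ (the $S^{n_1}$ contributes $n_1\cdot n$ and $S(-1)^{n_2}$ contributes $n_2$, while the remaining summands contribute $0$ because $a_j-a+1\le -1<0$).

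For the upper bound, I would invoke Proposition \ref{p22}, which gives $\qdepth(h_{F'})\le k_0 + h_1/h_0 = (n_1 n + n_2)/n_1 = n + n_2/n_1$. Since $n_1>n_2\ge 0$, we have $0\le n_2/n_1 < 1$, so $\qdepth(h_{F'})\le n + n_2/n_1 < n+1$, and since $\qdepth$ is an integer, $\qdepth(h_{F'})\le n$. For the lower bound, I would build $F'$ up as a direct sum and repeatedly apply Proposition \ref{exact} (or rather its direct-sum consequence): $h_{F'}=h_U+h_N$ whenever $F'=U\oplus N$, so $\qdepth(h_{F'})\ge\min\{\qdepth(h_U),\qdepth(h_N)\}$. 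Each summand is a shift $S(c)$ of $S$, so by Theorem \ref{qdeps} and Proposition \ref{shif}(3) we get $\qdepth(h_{S(c)}) = n - c$. Here $c\in\{0\}\cup\{-1\}\cup\{a_1-a,\ldots,a_r-a\}$, and all of these satisfy $c\le 0$, hence $\qdepth(h_{S(c)}) = n-c\ge n$ for every summand, and also using Proposition \ref{ciuciu} to collapse the multiplicities $S^{n_1}$ and $S(-1)^{n_2}$. Therefore $\qdepth(h_{F'})\ge n$, and combined with the upper bound, $\qdepth(h_{F'})=n$.

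The main subtlety — not really an obstacle, but the point requiring care — is the sharpness of the upper bound: it is essential that $n_1>n_2$ (strict) and that all other degrees $a_j$ satisfy $a\ge a_j+2$ rather than merely $a\ge a_j+1$. If $n_1=n_2$ then $h_1/h_0 = n+1$ and the bound would only give $\qdepth\le n+1$; if some $a_j=a-1$ it would contribute to $h_0$ and again spoil the count. So the delicate step is just the careful bookkeeping of $h_0$ and $h_1$ for $F'$ to confirm the hypotheses feed exactly into Proposition \ref{p22} to force $\qdepth(h_{F'})=n$ rather than $n+1$. Everything else is a routine assembly of earlier results.
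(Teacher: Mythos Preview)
Your argument is correct and essentially identical to the paper's own proof. The only cosmetic difference is that you first apply the global shift $F\mapsto F(a)$ via Proposition \ref{shif}(3) to normalize $k_0=0$, whereas the paper works directly with $F$ at $k_0=-a$; the computations of $h_0=n_1$, $h_1=n\cdot n_1+n_2$, the upper bound from Proposition \ref{p22} using $n_1>n_2$, and the lower bound from Theorem \ref{qdeps}, Proposition \ref{ciuciu}, Proposition \ref{shif}, and Proposition \ref{exact} are the same in both. (One small slip in your commentary: if some $a_j=a-1$ the extra summand $S(a_j-a)=S(-1)$ would contribute to $h_1$, not $h_0$; but this does not affect the proof itself.)
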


\begin{proof}
From Theorem \ref{qdeps}, Proposition \ref{ciuciu} and Proposition \ref{shif} it follows that 
\begin{align*}
& \qdepth(h_{S(a)^{n_1}})=n-a,\\ 
& \qdepth(h_{S(a-1)^{n_2}})=n-a+1, \text{ if }n_2>0\text{ and }\\
& \qdepth(h_{S(a_j)})=n-a_j\text{ for all }1\leq j\leq r.
\end{align*}
Using Proposition \ref{exact}, we deduce that 
\begin{equation}\label{na}
\qdepth(h_F)\geq \min\{n-a,\;n-a+1,\;n-a_j,\;1\leq j\leq r\}=n-a.
\end{equation}
On the other hand, from hypothesis, we have 
\begin{align*}
& h_F(-a)=\dim_K S(a)^{n_1}_{-a} = n_1 \dim_K S_0 =n_1\text{ and }\\
& h_F(-a+1)=\dim_K S(a)^{n_1}_{-a+1} + \dim_K S(a-1)^{n_2}_{-a+1} = n\cdot n_1 + n_2.
\end{align*}
Since $n_1>n_2$, from Proposition \ref{p22} it follows
that $\qdepth(h_F)\leq n-a$. Hence, the conclusion follows from \eqref{na}.
\end{proof}

\section{Hilbert depth of the Hilbert series of a complete intersection}

Let $S=K[x_1,\ldots,x_n]$ and $J=(f_1,\ldots,f_n)\subset S$ be a graded complete intersection ideal with $d_i=\deg(f_i)\geq 2$ for
all $1\leq i\leq n$. The Hilbert series of $S/J$ is
$$H_{S/J}(t)=\sum_{k\geq 0}h_{S/J}(k)t^k=(1+t+\cdots+t^{d_1-1})(1+t+\cdots+t^{d_2-1})\cdots(1+t+\cdots+t^{d_n-1}).$$

\begin{teor}\label{teo}
With the above notations, we have that 
$$\qdepth(h_{S/J})=n.$$
\end{teor}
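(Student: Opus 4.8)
The plan is to reduce Theorem~\ref{teo} to the case already settled in Theorem~\ref{qdeps}, namely $\qdepth(h_S)=n$, by an induction on the number $n$ of generators that exploits the multiplicative structure of the Hilbert series. Write $H_{S/J}(t)=\prod_{i=1}^n(1+t+\cdots+t^{d_i-1})$ with all $d_i\geq 2$. The key observation is that a factor $1+t+\cdots+t^{d_i-1}$ is itself the Hilbert series of a principal complete intersection quotient in one variable, and, more usefully, that $\frac{1}{(1-t)^n}=H_S(t)$ times a suitable rearrangement recovers the factors. Concretely, I would first establish the upper bound: since $k_0(S/J)=0$, $h_{S/J}(0)=1$ and $h_{S/J}(1)=n$ (each factor contributes exactly one $t^1$ term because $d_i\geq 2$), Proposition~\ref{p22} immediately gives $\qdepth(h_{S/J})\leq 0+\frac{n}{1}=n$. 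So the whole content is the lower bound $\qdepth(h_{S/J})\geq n$, i.e. $\beta^n_k(h_{S/J})\geq 0$ for all $0\leq k\leq n$.

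For the lower bound I would translate the $\beta$-positivity condition into a statement about the Hilbert series expanded in powers of $(1-t)$. Recall the standard fact that $\beta^d_k(h_M)\geq 0$ for all $k\leq d$ is equivalent to saying $H_M(t)$ can be written as $\sum_{k\geq 0}\beta^d_k\,t^k(1+t+\cdots)^{?}$ — more precisely, writing $H_M(t)=\frac{Q(t)}{(1-t)^d}$ with $Q(t)=\sum_k \beta^d_k t^k$, the numbers $\beta^d_k$ are exactly the coefficients of $Q$. Thus $\qdepth(h_{S/J})\geq n$ amounts to showing that the polynomial
$$Q(t)=(1-t)^n\,H_{S/J}(t)=\prod_{i=1}^n \frac{(1-t)(1-t^{d_i})}{1-t}\cdot\frac{1}{(1-t)} \;=\;\prod_{i=1}^n (1-t^{d_i})\cdot\frac{1}{?}$$
has nonnegative coefficients — but in fact $(1-t)^n H_{S/J}(t)=\prod_{i=1}^n (1-t)\cdot\frac{1-t^{d_i}}{1-t}=\prod_{i=1}^n(1-t^{d_i})$, which is $\prod_i(1-t^{d_i})$ and has coefficients in $\{-1,0,1\}$, so that naive computation shows the bound $d=n$ is \emph{not} witnessed this way. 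The resolution, and the technical heart of the argument, is instead to proceed by induction on $n$: assume the result for complete intersections in fewer variables and peel off one factor. If $J'=(f_1,\ldots,f_{n-1})\subset S'=K[x_1,\ldots,x_{n-1}]$, then $h_{S/J}(k)=\sum_{i=0}^{d_n-1} h_{S'/J'}(k-i)$, i.e. $H_{S/J}(t)=(1+t+\cdots+t^{d_n-1})H_{S'/J'}(t)$. One then needs to show that multiplying a Hilbert function with $\qdepth\geq n-1$ by the ``running-sum'' operator of length $d_n$ (with $d_n\geq 2$) raises the Hilbert depth by at least $1$.

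That last implication — that if $\qdepth(h)\geq m$ and $h'(k)=\sum_{i=0}^{d-1}h(k-i)$ with $d\geq 2$ then $\qdepth(h')\geq m+1$ — is what I expect to be the main obstacle, and it is where the hypergeometric Lemma~\ref{lemuta} should re-enter: writing $\beta^{m+1}_k(h')$ in terms of the $\beta^m_j(h)$'s produces a convolution whose sign one controls using the same binomial identity that yielded \eqref{bd2}, namely the positivity of $(-1)^k{}_2F_1(-k,n,-n;-1)$ packaged through the partial-fraction / Pochhammer manipulations in the proof of Lemma~\ref{lemuta}. I would handle the base case $n=1$ (a single $f_1$ with $d_1\geq 2$, so $h_{S/J}(k)=1$ for $0\leq k\leq d_1-1$ and $0$ otherwise) by a direct check that $\beta^1_0=1\geq 0$ and $\beta^1_1=h(1)-h(0)=0\geq 0$, giving $\qdepth=1$; alternatively, for general $n$ with all $d_i\geq 2$ one can observe that $S/J$ deformation-retracts (in the Hilbert-series sense) onto the Artinian reduction $K[x_1,\ldots,x_n]/(x_1^{d_1},\ldots,x_n^{d_n})$, reducing to the finite-length case, and then invoke Proposition~\ref{exact} together with the shift formula Proposition~\ref{shif} applied to the Koszul-type filtration of $S/J$ by the $f_i$. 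The cleanest route, which I would pursue first, is the induction described above, closing the loop by citing \eqref{bd2} and Lemma~\ref{lemuta} at the inductive step; the fallback is the explicit $\beta$-computation for $K[x]/(x^{d_1},\ldots,x^{d_n})$ combined with Corollary~\ref{free} and Proposition~\ref{exact}.
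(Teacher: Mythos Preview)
Your upper bound via Proposition~\ref{p22} is exactly what the paper does. Everything else diverges from the paper, and there is one real confusion and one genuine gap.

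\textbf{The confusion.} Your paragraph computing $(1-t)^nH_{S/J}(t)=\prod_i(1-t^{d_i})$ and concluding that ``the bound $d=n$ is not witnessed this way'' rests on the claim that the $\beta^d_k(h_M)$ of this paper are the coefficients of $(1-t)^dH_M(t)$. They are not. Compare \eqref{alfamk}: $h_M(k)=\sum_j\binom{d-j}{k-j}\beta^d_j$, whereas the coefficients $c_j$ of $(1-t)^dH_M(t)$ satisfy $h_M(k)=\sum_j\binom{d-1+k-j}{k-j}c_j$. These are different transforms, so the whole digression is based on a misreading of the definition and should be dropped.

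\textbf{The gap.} Your main plan is induction on $n$, reducing to the lemma: if $\qdepth(h)\geq m$ and $h'(k)=\sum_{i=0}^{d-1}h(k-i)$ with $d\geq 2$, then $\qdepth(h')\geq m+1$. You flag this as ``the main obstacle'' and propose to attack it with Lemma~\ref{lemuta}. The lemma is true, but the hypergeometric machinery is a red herring here. A one-line Pascal computation gives, for the $d=2$ convolution $h''(k)=h(k)+h(k-1)$,
\[
\beta_k^{m+1}(h'')=\sum_j(-1)^{k-j}\Bigl[\tbinom{m+1-j}{k-j}-\tbinom{m-j}{k-j-1}\Bigr]h(j)=\beta_k^{m}(h),
\]
which is $\geq 0$ for $k\leq m$ by hypothesis and equals $h(m+1)\geq 0$ for $k=m+1$. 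For general $d\geq 2$ write $h'=h''+\sum_{i=2}^{d-1}h(\cdot-i)$; each extra shift has $\qdepth\geq m+2$ by Proposition~\ref{shif}, so its $\beta^{m+1}$'s are nonnegative by Remark~\ref{r22}, and you are done. So your strategy does work, but not for the reason you guessed, and your proposal as written does not contain the missing argument.

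\textbf{What the paper actually does.} The paper avoids your convolution lemma entirely. It runs a \emph{double} induction on $n$ and on $d_1+\cdots+d_n$, shrinking one exponent at a time: with $d_n\geq 3$ it writes
\[
H_{S/J}(t)=H_{S/I}(t)+t^{\,d_n-1}H_{S'/J'}(t),
\]
where $I$ has $\deg(g_n)=d_n-1$ and $J'\subset S'=K[x_1,\ldots,x_{n-1}]$. This yields $\beta_k^n(h_{S/J})=\beta_k^n(h_{S/I})+\beta_{k-d_n+1}^{\,n-d_n+1}(h_{S'/J'})$, and both summands are nonnegative by the two induction hypotheses together with Remark~\ref{r22}. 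In particular the paper's proof of Theorem~\ref{teo} is self-contained and uses neither Lemma~\ref{lemuta} nor Theorem~\ref{qdeps}; it actually \emph{reproves} $\qdepth(h_S)=n$ as the special case $r=0$ in Corollary~\ref{coro}.
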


\begin{proof}
First, note that $\qdepth(h_{S/J})\leq n$ by Proposition \ref{p22}, since $h_{S/J}(0)=1$ and $h_{S/J}(1)=n$.

We use induction on $n\geq 1$ and $d:=d_1+\cdots+d_n\geq 2n$. If $n=1$ then there is nothing to prove.
If $d=2n$, that is $d_i=2$ for all $1\leq i\leq n$, then:
$$\beta_k^n(h_{S/J})=\sum_{j=0}^k (-1)^{k-j} \binom{n-j}{k-j}\binom{n}{j} = \sum_{j=0}^k (-1)^{k-j} (-1)^{k-j} \binom{n}{k}\binom{k}{j}.$$
Therefore, $\beta_0^n(h_{S/J})=1$ and $\beta_k^n(h_{S/J})=0$ for $2\leq k\leq n$. From Remark \ref{r22}, it follows that $\qdepth(h_{S/J})\geq n$
and thus $\qdepth(h_{S/J})=n$.

Assume $d>2n$. Without any loss of generality, we may assume that $d_n\geq 3$. Let $I=(g_1,\ldots,g_n)$ be a graded complete intersection
ideal with $\deg(g_i)=\deg(f_i)=d_i$ for $1\leq i\leq n-1$ and $\deg(g_n)=d_n-1$. 
Let $J'=(f'_1,\ldots,f'_{n-1})\subset S'=K[x_1,\ldots,x_{n-1}]$
be a graded complete intersection ideal with $\deg(f'_i)=\deg(f_i)=d_i$ for $1\leq i\leq n-1$.
We have that that 
$$H_{S/J}(t)=(1+t+\cdots+t^{d_1-1})\cdots (1+t+\cdots+t^{d_{n-1}-1})(1+t+\cdots+t^{d_n-2}+t^{d_n-1})= $$
\begin{equation}\label{41}
= H_{S/I}(t) + t^{d_n-1}H_{S'/J'}(t).
\end{equation}
From \eqref{41}, it follows that for $0\leq k\leq n$ we have that
\begin{align*}
& \beta_k^n(h_{S/J}) = \sum_{j=0}^k (-1)^{k-j} \binom{n-j}{k-j} h_{S/J}(j) = \\
& = \sum_{j=0}^k (-1)^{k-j} \binom{n-j}{k-j} (h_{S/I}(j)+h_{S'/J'}(j-d_n+1)) = \\
& = \sum_{j=0}^k (-1)^{k-j} \binom{n-j}{k-j} h_{S/I}(j) + \sum_{j=0}^k (-1)^{k-j} \binom{n-j}{k-j}h_{S'/J'}(j-d_n+1) = 
\end{align*}
\begin{equation}\label{42}
= \beta_k^n(h_{S/I}) +  \sum_{j=0}^k (-1)^{k-j} \binom{n-j}{k-j}h_{S'/J'}(j-d_n+1).
\end{equation}
From induction hypothesis, it follows that $\beta_k^n(h_{S/I})\geq 0$ for all $0\leq k\leq n$.
If $k<d_n-1$ then from \eqref{42} it follows that 
\begin{equation}\label{43}
\beta_k^n(h_{S/J})=\beta_k^n(h_{S/I})\geq 0.
\end{equation}
If $k\geq d_n-1$ then 
\begin{align*}
& \sum_{j=0}^k (-1)^{k-j} \binom{n-j}{k-j}h_{S'/J'}(j-d_n+1) = \sum_{j=d_n-1}^k  (-1)^{k-j} \binom{n-j}{k-j}h_{S'/J'}(j-d_n+1) =\\
& = \sum_{j'=0}^{k-d_n+1} (-1)^{(k-d_n+1)-j'} \binom{(n-d_n+1)-j'}{(k-d_n+1)-j'}h_{S'/J'}(j') = \beta^{n-d_n+1}_{k-d_n+1}(h_{S'/J'}).
\end{align*}
Therefore, from \eqref{42} and the induction hypothesis it follows that
\begin{equation}\label{44}
\beta_k^n(h_{S/J})=\beta_k^n(h_{S/I})+\beta_{k-d_n+1}^{n-d_n+1}(h_{S'/J'})\geq 0.
\end{equation}
The conclusion follows from \eqref{43} and \eqref{44}.
\end{proof}

\begin{cor}\label{coro}
If $J=(f_1,\ldots,f_r)\subset S$ is a graded complete intersection with $\deg(f_i)\geq 2$ for all $1\leq i\leq r$, where $0\leq r\leq n$,
then 
$$\qdepth(h_{S/J})=n.$$
In particular, we reobtain the result $\qdepth(h_S)=n$.
\end{cor}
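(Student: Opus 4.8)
The strategy is to deduce the general statement from the full-length case $r=n$, which is exactly Theorem \ref{teo}, by enlarging $J$ with auxiliary generators of very high degree. I assume $n\ge 1$ throughout; the degenerate case $n=0$ (which forces $r=0$ and $S/J=K$) is checked directly.

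For the upper bound, since every generator of $J$ has degree $\ge 2$, the ideal $J$ has no nonzero element in degrees $0$ or $1$, so $h_{S/J}(0)=1$ and $h_{S/J}(1)=n$; Proposition \ref{p22} then yields $\qdepth(h_{S/J})\le n$. For the lower bound, I first use the standard fact that the Hilbert function of a graded complete intersection $(g_1,\dots,g_s)\subset S$ depends only on $n$ and the multiset of degrees $\{\deg g_1,\dots,\deg g_s\}$ (its minimal free resolution is the Koszul complex, giving $H(t)=\prod_i(1-t^{\deg g_i})/(1-t)^n$). Hence $h_{S/J}$ is unchanged if $J$ is replaced by the monomial complete intersection $(x_1^{d_1},\dots,x_r^{d_r})$, where $d_i=\deg f_i$. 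Now fix an integer $D>n$ (so $D\ge 2$ since $n\ge 1$) and set $\hat J:=(x_1^{d_1},\dots,x_r^{d_r},\,x_{r+1}^{D},\dots,x_n^{D})\subset S$, a monomial complete intersection of $n$ forms, each of degree $\ge 2$. Comparing Hilbert series, $H_{S/\hat J}(t)=H_{S/J}(t)\cdot(1-t^{D})^{\,n-r}\equiv H_{S/J}(t)\pmod{t^{D}}$, so $h_{S/\hat J}(j)=h_{S/J}(j)$ for all $0\le j\le n<D$. Since, by \eqref{betamk}, each $\beta^n_k(h_M)$ with $k\le n$ involves only $h_M(0),\dots,h_M(k)$, this gives $\beta^n_k(h_{S/J})=\beta^n_k(h_{S/\hat J})$ for all $0\le k\le n$. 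By Theorem \ref{teo} we have $\qdepth(h_{S/\hat J})=n$, i.e. $\beta^n_k(h_{S/\hat J})\ge 0$ for $0\le k\le n$; therefore $\beta^n_k(h_{S/J})\ge 0$ for $0\le k\le n$, which means $\qdepth(h_{S/J})\ge n$. Combined with the upper bound, $\qdepth(h_{S/J})=n$. Taking $r=0$ gives $\hat J=(x_1^{D},\dots,x_n^{D})$, so the argument recovers $\qdepth(h_S)=n$ relying only on Theorem \ref{teo}, whose proof does not invoke Theorem \ref{qdeps}; there is no circularity.

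I do not expect a real obstacle here: the proof is essentially a bookkeeping reduction. The one point to handle carefully is the legitimacy of the two replacements — (i) swapping $J$ for a monomial complete intersection of the same degrees, justified by the fact that a complete intersection's Hilbert function is determined by $n$ and the generator degrees, and (ii) padding with generators of degree $D>n$, which is harmless precisely because $\beta^n_k$ for $k\le n$ reads off $h_{S/J}$ only in degrees $\le n$, so inflating the extra degrees past $n$ leaves these numbers untouched. One should also note in passing that the padding generators genuinely have degree $\ge 2$ (true since $D>n\ge 1$), so Theorem \ref{teo} applies to $\hat J$.
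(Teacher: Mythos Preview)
Your proof is correct and follows essentially the same route as the paper: pad $J$ to a length-$n$ complete intersection by adjoining generators of degree $>n$, observe that the Hilbert functions agree in degrees $\le n$ so the $\beta^n_k$ coincide, and invoke Theorem~\ref{teo}. The paper simply takes the extra degrees equal to $n+1$ and is less explicit about the upper bound and the replacement by a monomial complete intersection, but the argument is the same.
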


\begin{proof}
Let $\overline J=(g_1,\ldots,g_n)$ be a graded complete intersection with $\deg(g_i)=\deg(f_i)$ for all $1\leq i\leq r$
and $\deg(g_i)=n+1$ for $r+1\leq i\leq n$.
Since 
\begin{align*}
& H_{S/J}(t)=(1+t+\cdots+t^{d_1-1})\cdots (1+t+\cdots+t^{d_r-1})\cdot (1+t+t^2+\cdots )^{n-r}\text{ and }\\
& H_{S/\overline{J}}(t)=(1+t+\cdots+t^{d_1-1})\cdots (1+t+\cdots+t^{d_r-1})\cdot (1+t+t^2+\cdots+t^n)^{n-r},
\end{align*}
it follows that $h_{S/J}(j)=h_{S/\overline{J}}(j)$ for all $0\leq j\leq n$. Therefore 
$$\beta_k^n(h_{S/J})=\beta_k^n(h_{S/\overline{J}})\text{ for all }0\leq k\leq n,$$
hence the result follows from Theorem \ref{teo}.
\end{proof}

\section{Hilbert depth of the Hilbert series of a tensor product of modules}

As in the beginning of the section, $K$ is a field, $A$ is a standard graded $K$-algebra and the modules over $A$
are considered finitely generated and graded unless is stated otherwise. 

We recall the following well known lemma,
regarding the Hilbert series of a tensor product of modules, for which we sketch a proof in order of completion.

\begin{lema}\label{l31}
Let $M,N$ be two $A$-modules such that $N$ is flat. Then:
$$\H_{M\otimes_A N}(t)=\frac{\H_M(t)\H_N(t)}{\H_A(t)}.$$
\end{lema}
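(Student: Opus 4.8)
The plan is to compare the Hilbert series of $M$ and of $M\otimes_A N$ by means of a graded free resolution of $M$ over $A$. First I would fix a graded free resolution
$$\cdots\to F_2\to F_1\to F_0\to M\to 0,$$
where each $F_i=\bigoplus_j A(-a_{ij})$ is a finitely generated graded free $A$-module (such resolutions exist since $A$ is Noetherian). Taking the \emph{minimal} such resolution, one has that $F_i$ is generated in degrees $\geq k_0(M)+i$, because the $i$-th syzygy module is contained in $\big(\bigoplus_{m>0}A_m\big)F_{i-1}$. In particular, in each fixed degree $d$ only finitely many of the spaces $(F_i)_d$ are nonzero, so counting dimensions degree by degree in the exact complex yields, as formal Laurent series bounded below,
$$\H_M(t)=\sum_{i\geq 0}(-1)^i\H_{F_i}(t)=\Big(\sum_{i\geq 0}(-1)^i\sum_j t^{a_{ij}}\Big)\H_A(t),$$
where I used $\H_{F_i}(t)=\big(\sum_j t^{a_{ij}}\big)\H_A(t)$ and the fact that the bracketed sum is a well-defined formal series (only finitely many pairs $(i,j)$ contribute to any given power of $t$).

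Next I would apply $-\otimes_A N$ to the resolution. Since $N$ is flat over $A$, this functor is exact, so
$$\cdots\to F_1\otimes_A N\to F_0\otimes_A N\to M\otimes_A N\to 0$$
is again exact. Moreover $F_i\otimes_A N\cong\bigoplus_j N(-a_{ij})$, hence $\H_{F_i\otimes_A N}(t)=\big(\sum_j t^{a_{ij}}\big)\H_N(t)$, and the same degreewise finiteness permits taking the Euler characteristic:
$$\H_{M\otimes_A N}(t)=\sum_{i\geq 0}(-1)^i\H_{F_i\otimes_A N}(t)=\Big(\sum_{i\geq 0}(-1)^i\sum_j t^{a_{ij}}\Big)\H_N(t).$$
Comparing the two displays and dividing by $\H_A(t)$, which is invertible as a power series because $\H_A(0)=1$, gives $\H_{M\otimes_A N}(t)=\H_M(t)\H_N(t)/\H_A(t)$, as claimed.

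The step I expect to need the most care is the legitimacy of the two alternating sums of Hilbert series: these are genuine identities of formal series precisely because, in every degree, only finitely many homological terms contribute, which is exactly what the bound ``$F_i$ generated in degrees $\geq k_0(M)+i$'' guarantees (and which also ensures $M\otimes_A N$ is bounded below with finite-dimensional graded pieces, so that its Hilbert series is defined). Everything else — flatness preserving exactness, and the additivity of Hilbert series under finite direct sums and shifts $N\mapsto N(-a)$ — is routine.
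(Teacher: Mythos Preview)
Your proof is correct and follows essentially the same approach as the paper: take a graded free resolution of $M$ with $F_i$ concentrated in degrees $\geq k_0(M)+i$, compute the Euler characteristic to get $\H_M(t)=\sum_i(-1)^i\H_{F_i}(t)$, tensor with the flat module $N$ to preserve exactness, use $\H_{F_i\otimes_A N}(t)=\H_{F_i}(t)\H_N(t)/\H_A(t)$, and compare. You are in fact more explicit than the paper about why the infinite alternating sums make sense as formal series, which is a welcome addition.
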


\begin{proof}
Take a free resolution of $M$,
\begin{equation}\label{rezo}
\cdots \to F_2\to F_1\to F_0\to M\to 0
\end{equation}
where each $F_n$ is concentrated in degrees $\geq n$. It follows that
\begin{equation}\label{diaree}
\H_M(t)=\sum_{i\geq 0}(-1)^i \H_{F_i}(t).
\end{equation}
Taking $\otimes_A N$ in \eqref{rezo} we get an exact sequence
\begin{equation}\label{exacts}
\cdots \to F_2\otimes_A N \to F_1\otimes_A N \to F_0\otimes_A N \to M\otimes_A N \to 0,
\end{equation}
Since $F_i$ is free, it follows that 
$$\H_{F_i\otimes_A N}(t)=\frac{\H_{F_i}(t)\H_N(t)}{\H_A(t)}\text{ for all }i,$$ 
and thus from \eqref{diaree} we get
$$\H_{M\otimes_A N}(t) = \sum_{i\geq 0}(-1)^i \frac{\H_{F_i}(t)\H_N(t)}{\H_A(t)} = \frac{\H_N(t)}{\H_A(t)}\sum_{i\geq 0}(-1)^i \H_{F_i}(t)
=\frac{\H_M(t)\H_N(t)}{\H_A(t)}, $$
as required.
\end{proof}

\begin{lema}\label{l32}
Let $S=K[x_1,\ldots,x_n]$, $\overline S=S[x_{n+1}]$ and $M$ be a $S$-module. 

If $\overline M=M[x_{n+1}]:=M\otimes_S \overline S$, then
$$\H_{\overline M}(t)=\frac{\H_M(t)}{(1-t)}.$$
In particular, $h_{\overline M}(j)=\sum\limits_{\ell\leq j}h_M(\ell)$.
\end{lema}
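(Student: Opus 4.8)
The plan is to obtain this as an immediate consequence of Lemma \ref{l31}, applied with $A=S$ and $N=\overline S=S[x_{n+1}]$. The first step is to check that $\overline S$ is a flat $S$-module: as a graded $S$-module it decomposes, according to the power of $x_{n+1}$, as $\overline S=\bigoplus_{k\geq 0}S\cdot x_{n+1}^k\cong\bigoplus_{k\geq 0}S(-k)$, so it is free, hence flat. Summing the Hilbert series of the shifted copies gives $\H_{\overline S}(t)=\sum_{k\geq 0}t^k\,\H_S(t)=\H_S(t)/(1-t)$.

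Next, since $\overline S$ is flat, Lemma \ref{l31} applies and yields
$$\H_{\overline M}(t)=\frac{\H_M(t)\,\H_{\overline S}(t)}{\H_S(t)}=\frac{\H_M(t)}{1-t},$$
the cancellation of $\H_S(t)$ being legitimate because $\H_S(t)=1/(1-t)^n$ is invertible as a formal power series. This proves the main assertion.

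For the ``in particular'' part, I would expand $1/(1-t)=\sum_{m\geq 0}t^m$ and multiply power series: $\H_{\overline M}(t)=\bigl(\sum_{\ell}h_M(\ell)t^\ell\bigr)\bigl(\sum_{m\geq 0}t^m\bigr)=\sum_j\bigl(\sum_{\ell\leq j}h_M(\ell)\bigr)t^j$, where the inner sum is finite since $h_M(\ell)=0$ for $\ell\ll 0$ ($M$ being finitely generated graded). Comparing the coefficient of $t^j$ gives $h_{\overline M}(j)=\sum_{\ell\leq j}h_M(\ell)$.

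I do not expect a genuine obstacle here; the only points needing a line of care are verifying the hypothesis of Lemma \ref{l31} (flatness of $\overline S$) and checking that the formal power series manipulations are valid, i.e.\ that $\H_S(t)$ is invertible so the quotient in Lemma \ref{l31} makes sense. As an alternative that avoids Lemma \ref{l31} altogether, one can argue degreewise: from $\overline M=\bigoplus_{k\geq 0}M x_{n+1}^k$ one gets $(\overline M)_j=\bigoplus_{k\geq 0}M_{j-k}$, hence $h_{\overline M}(j)=\sum_{k\geq 0}h_M(j-k)=\sum_{\ell\leq j}h_M(\ell)$, and then $\H_{\overline M}(t)=\H_M(t)/(1-t)$ follows by summing over $j$. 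I would present the Lemma \ref{l31} version for uniformity with the preceding lemma.
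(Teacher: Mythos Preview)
Your proof is correct and follows essentially the same route as the paper: the paper's argument is simply to note that $\H_S(t)=1/(1-t)^n$, $\H_{\overline S}(t)=1/(1-t)^{n+1}$, and $\overline S$ is flat over $S$, and then invoke Lemma~\ref{l31}. You supply more detail (the free decomposition $\overline S\cong\bigoplus_{k\geq 0}S(-k)$ and the power-series expansion for the ``in particular'' part), but the strategy is identical.
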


\begin{proof}
Since $H_S(t)=\frac{1}{(1-t)^n}$, $H_{\overline S}(t)=\frac{1}{(1-t)^{n+1}}$ and $\overline S$ is flat over $S$, the conclusion follows from Lemma \ref{l31}.
\end{proof}

\begin{teor}\label{ultimap}
Let $S=K[x_1,\ldots,x_n]$, $\overline S=S[x_{n+1}]$, $M$ be a $S$-module and $\overline M=M[x_{n+1}]$. Then
$$\qdepth(h_{\overline M})\geq \qdepth(h_M).$$
\end{teor}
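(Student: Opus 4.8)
The plan is to argue directly with the integers $\beta^d_k$ of \eqref{betamk}. Assume $M\neq 0$ and set $d:=\qdepth(h_M)$, which is finite by Proposition \ref{p22}; by Definition \ref{d2} we have $\beta^d_i(h_M)\geq 0$ for every $i$, and by Remark \ref{r22}, $k_0:=k_0(M)=k_0(h_M)=k_0(h_{\overline M})$ — the last equality because, by Lemma \ref{l32}, $h_{\overline M}(j)=\sum_{\ell\leq j}h_M(\ell)$ is a sum of nonnegative terms that is positive exactly when $j\geq k_0$. It then suffices to prove $\beta^d_k(h_{\overline M})\geq 0$ for all $k_0\leq k\leq d$, since then $d$ is an admissible value in the maximum defining $\qdepth(h_{\overline M})$, whence $\qdepth(h_{\overline M})\geq d$.

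First I would combine Lemma \ref{l32} with the reconstruction formula \eqref{alfamk} for $M$: for $k_0\leq k\leq d$,
$$h_{\overline M}(k)=\sum_{\ell=k_0}^{k}h_M(\ell)=\sum_{\ell=k_0}^{k}\sum_{i=k_0}^{\ell}\binom{d-i}{\ell-i}\beta^d_i(h_M)=\sum_{i=k_0}^{k}\beta^d_i(h_M)\cdot T(d-i,k-i),$$
where $T(p,q):=\sum_{m=0}^{q}\binom{p}{m}$. The combinatorial heart of the proof is the identity
$$T(p,q)=\binom{p}{q}+\sum_{s=1}^{q}2^{s-1}\binom{p-s}{q-s}\qquad(0\leq q\leq p),$$
which I would prove by induction on $q$ starting from the recursion $T(p,q)=2\,T(p-1,q-1)+\binom{p-1}{q}$ (immediate from Pascal's rule). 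Inserting this identity above and reindexing the inner sum by $j=i+s$ rewrites the previous display as
$$h_{\overline M}(k)=\sum_{j=k_0}^{k}\binom{d-j}{k-j}\,\gamma_j,\qquad \gamma_j:=\beta^d_j(h_M)+\sum_{i=k_0}^{j-1}2^{\,j-i-1}\beta^d_i(h_M).$$

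Since the passage from the sequence $\big(\beta^d_j(h)\big)_j$ to the sequence $\big(h(k)\big)_k$ via \eqref{alfamk} is given by a lower unitriangular matrix, hence invertible, this expansion is unique; comparing it with \eqref{alfamk} applied to $\overline M$ yields $\beta^d_j(h_{\overline M})=\gamma_j$ for all $k_0\leq j\leq d$. Every $\beta^d_i(h_M)$ occurring in $\gamma_j$ is nonnegative (because $d=\qdepth(h_M)$) and all the weights $2^{\,j-i-1}$ are positive, so $\beta^d_j(h_{\overline M})=\gamma_j\geq 0$ for $k_0\leq j\leq d$, which is exactly what was required.

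The only genuine obstacle is the binomial identity for $T(p,q)$ together with keeping the two nested summation interchanges and the reindexing honest; everything else is formal bookkeeping. One could instead run the identical argument at the level of Hilbert series, using $\H_{\overline M}(t)=\H_M(t)/(1-t)$ from Lemma \ref{l32} and expanding $\H_M(t)$ modulo $t^{d+1}$ in the basis $\{t^i(1+t)^{d-i}\}$, but this merely repackages the same computation.
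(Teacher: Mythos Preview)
Your proof is correct, but it follows a genuinely different route from the paper's. The paper computes $\beta_k^d(h_{\overline M})$ by substituting $h_{\overline M}(j)=\sum_{\ell\leq j}h_M(\ell)$ directly into \eqref{betamk}, then reindexes by the shift $t=j-\ell$ to obtain
\[
\beta_k^d(h_{\overline M})=\sum_{t=0}^{k-k_0}\beta_{k-t}^{\,d-t}(h_M),
\]
a sum of $\beta$'s of $h_M$ at \emph{varying} upper indices $d-t\leq d$; nonnegativity of each summand then requires the monotonicity statement recorded in Remark~\ref{r22} (that validity at $d$ propagates to smaller parameters). Your argument instead passes through the inverse relation \eqref{alfamk} and the partial-row-sum identity
\[
T(p,q)=\binom{p}{q}+\sum_{s=1}^{q}2^{s-1}\binom{p-s}{q-s},
\]
yielding the explicit formula
\[
\beta_j^d(h_{\overline M})=\beta_j^d(h_M)+\sum_{i=k_0}^{j-1}2^{\,j-i-1}\beta_i^d(h_M),
\]
which expresses $\beta_j^d(h_{\overline M})$ as a nonnegative combination of the $\beta_i^d(h_M)$ at the \emph{fixed} index $d$. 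This buys you a self-contained argument that does not appeal to Remark~\ref{r22}, at the price of proving one extra binomial identity; the paper's computation is shorter but leans on the external monotonicity result. Your final remark about working modulo $t^{d+1}$ in the basis $\{t^i(1+t)^{d-i}\}$ is an apt summary: both proofs are ultimately the same change of basis, just organized differently.
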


\begin{proof}
Let $d=\qdepth(h_M)$, $k_0=k_0(M)$ and $k_0\leq k\leq d$. By \eqref{betamk} we have that
\begin{equation}
\beta_k^d(h_M)=\sum_{j=k_0}^k (-1)^{k-j} \binom{d-j}{k-j} h_M(j) \geq 0.
\end{equation}
By \eqref{betamk}, Remark \ref{r22} and Lemma \ref{l32} it follows that
\begin{align*}
& \beta_k^d(h_{\overline M})=\sum_{j=k_0}^k (-1)^{k-j} \binom{d-j}{k-j} h_{\overline M}(j) = 
 \sum_{j=k_0}^k (-1)^{k-j} \binom{d-j}{k-j} \sum_{\ell=k_0}^j h_{M}(\ell) = \\
& = \sum_{t=0}^{k-k_0} \sum_{j=k_0}^k (-1)^{k-j} \binom{d-j}{k-j} h_{M}(\ell-t) = (j'=j-t) = \\
& = \sum_{t=0}^{k-k_0} \sum_{j'=k_0}^{k-t} (-1)^{(k-t)-j'} \binom{(d-t)-j'}{(k-t)-j'}h_{M}(j') =
     \sum_{t=0}^{k-k_0} \beta_{k-t}^{d-t}(M)\geq 0,
\end{align*}
as required.
\end{proof}

\begin{obs}\rm
Let $k_0:=k_0(M)$ and let $k_0\leq k\leq d$ be some integers. 
We have that
\begin{align*}
& \beta_k^d(h_{\overline M})= \sum_{j=k_0}^k (-1)^{k-j} \binom{d-j}{k-j} \sum_{\ell=k_0}^j h_M(\ell) = \\
& = \sum_{\ell=k_0}^k (-1)^{k-\ell} \left( \sum_{j=\ell}^k (-1)^{j-\ell} \binom{d-j}{k-j} \right) h_M(\ell) = \\
& = \sum_{\ell=k_0}^k (-1)^{k-\ell} \binom{d-\ell}{k-\ell} {}_2 F_{1}(1,-k+\ell,-d+\ell;-1) h_M(\ell).
\end{align*}
On the other hand, for any noninteger $s$, it holds that
$$ {}_2 F_{1}(1,-s,-s;-1)=\sum_{\ell=0}^s (-1)^{\ell} =\begin{cases} 1,&s\text{ is even }\\0,&s\text{ is odd }\end{cases}.$$
Therefore, we get
$$\beta_d^d(h_{\overline M})=\sum_{\ell=k_0}^k h_M(\ell|d) \geq 0,\text{ where }
h_M(\ell|d) = \begin{cases} h_M(\ell),& d\equiv \ell(\bmod\;2) \\ 0,& \text{ otherwise } \end{cases}.$$
\end{obs}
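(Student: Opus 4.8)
The plan is to derive both displayed identities in this remark directly from Lemma \ref{l32}. First I would use the formula $h_{\overline M}(j)=\sum_{\ell=k_0}^{j}h_M(\ell)$ from that lemma to write
$$\beta_k^d(h_{\overline M})=\sum_{j=k_0}^k (-1)^{k-j}\binom{d-j}{k-j}\sum_{\ell=k_0}^{j}h_M(\ell),$$
and then interchange the order of the two summations. Collecting the coefficient of each $h_M(\ell)$, where now $j$ runs from $\ell$ to $k$, gives
$$\beta_k^d(h_{\overline M})=\sum_{\ell=k_0}^{k}\Bigl(\sum_{j=\ell}^{k}(-1)^{k-j}\binom{d-j}{k-j}\Bigr)h_M(\ell),$$
so that everything reduces to evaluating the inner alternating binomial sum $c_\ell:=\sum_{j=\ell}^{k}(-1)^{k-j}\binom{d-j}{k-j}$.

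Next I would identify $c_\ell$ as a value of ${}_2F_1$. Setting $m:=k-\ell$, $D:=d-\ell$ and reindexing by $p=j-\ell$, the summand becomes $a_p=(-1)^{m-p}\binom{D-p}{m-p}$, with leading term $a_0=(-1)^m\binom{D}{m}$ and terminating at $p=m$. A short computation of the ratio of consecutive terms yields $a_{p+1}/a_p=-(m-p)/(D-p)$, which matches exactly the term ratio $\tfrac{(1+p)(-m+p)}{(-D+p)(p+1)}\cdot(-1)$ of the hypergeometric series with parameters $a=1$, $b=-m$, $c=-D$ evaluated at $z=-1$. Hence $c_\ell=(-1)^{m}\binom{D}{m}\,{}_2F_1(1,-m,-D;-1)$, which is precisely the first displayed formula of the remark with $m=k-\ell$ and $D=d-\ell$.

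Finally I would specialize to $k=d$. Then $m=D=d-\ell=:s$, so the two lower parameters of ${}_2F_1(1,-s,-s;-1)$ coincide and the Pochhammer factors $(-s)_p$ in numerator and denominator cancel termwise; using $(1)_p/p!=1$, the terminating series collapses to the plain alternating sum $\sum_{p=0}^{s}(-1)^p$, which equals $1$ when $s$ is even and $0$ when $s$ is odd. Substituting back, the coefficient of $h_M(\ell)$ in $\beta_d^d(h_{\overline M})$ is $(-1)^{d-\ell}\binom{d-\ell}{d-\ell}\cdot\sum_{p=0}^{d-\ell}(-1)^p$, which is $1$ if $d\equiv\ell\pmod 2$ and $0$ otherwise. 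This gives $\beta_d^d(h_{\overline M})=\sum_{\ell=k_0}^{d}h_M(\ell\mid d)$, and nonnegativity is then immediate because each value $h_M(\ell)$ is a dimension, hence $\geq 0$.

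The step I expect to be the main obstacle is the middle one: recognizing the purely combinatorial sum $c_\ell$ as a hypergeometric value requires the correct reindexing and a careful match of term ratios, and one must handle with care the degenerate case $b=c=-s$, where ${}_2F_1(1,-s,-s;-1)$ is meaningful only as a terminating (regularized) series and not as a convergent one. Once the parameter collision on the diagonal $k=d$ is correctly interpreted as the finite alternating sum $\sum_{p=0}^{s}(-1)^p$, the remaining arithmetic and the final nonnegativity are routine.
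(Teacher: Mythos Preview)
Your proposal is correct and follows exactly the computation sketched in the remark itself: substituting $h_{\overline M}(j)=\sum_{\ell\le j}h_M(\ell)$ from Lemma~\ref{l32}, swapping the order of summation, recognizing the inner alternating binomial sum as $\binom{d-\ell}{k-\ell}\,{}_2F_1(1,\ell-k,\ell-d;-1)$ via the term-ratio check, and then specializing to $k=d$ so that the two lower parameters collide and the series collapses to $\sum_{p=0}^{s}(-1)^p$. The only additions you make beyond the paper are the explicit verification of the hypergeometric term ratio and the caveat about interpreting ${}_2F_1(1,-s,-s;-1)$ as a terminating series, both of which are welcome but do not change the route.
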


\subsection*{Aknowledgments} 

The second author, Mircea Cimpoea\c s, was supported by a grant of the Ministry of Research, Innovation and Digitization, CNCS - UEFISCDI, 
project number PN-III-P1-1.1-TE-2021-1633, within PNCDI III.





\end{document}